\theoremstyle{plain}
\newtheorem{theorem}{Theorem}[section]
\newtheorem{lemma}[theorem]{Lemma}
\newtheorem{corollary}[theorem]{Corollary}
\theoremstyle{definition}
\newtheorem{definition}[theorem]{Definition}
\theoremstyle{remark}
\numberwithin{equation}{section}
\numberwithin{theorem}{section}
\newcommand{\mc}[1]{{\mathcal #1}}
\newcommand{\mf}[1]{{\mathfrak #1}}
\newcommand{\ms}[1]{{\mathscr #1}}
\newcommand{\bb}[1]{{\mathbb #1}}
\DeclareMathOperator{\tr}{Tr}
\DeclareMathOperator{\Ent}{Ent}
\renewcommand{\epsilon}{\varepsilon}
\renewcommand{\tilde}{\widetilde}
\newcommand{\leqnomode}{\tagsleft@true\let\veqno\@@leqno}
\title[MFT and Schr\"odinger problems]
{
Macroscopic fluctuation theory \\ from a Lagrangian viewpoint \\ and the Schr\"odinger problem
}
\author [L.\ Bertini]{Lorenzo Bertini}
\address{Lorenzo Bertini \hfill\break \indent
  Dipartimento di Matematica, Universit\`a di Roma `La Sapienza',
  \hfill\break \indent
  I-00185 Roma, Italy}
\email{bertini@mat.uniroma1.it}
\author [D.\ Gabrielli]{Davide Gabrielli}
\address{Davide Gabbrielli \hfill\break \indent
	DISIM, Universit\`a dell' Aquila,
	\hfill\break \indent
	Via Vetoio Loc. Coppito 
	\hfill\break \indent
	67100 L'Aquila, Italy}
\email{davide.gabrielli@univaq.it}
\author [G.\ Jona-Lasinio]{Giovanni Jona-Lasinio}
\address{Giovanni Jona-Lasinio \hfill\break \indent
	Dipartimento di Fisica and INFN, Universit\`a di Roma `La Sapienza',
	\hfill\break \indent
	I-00185 Roma, Italy}
\email{gianni.jona@roma1.infn.it}
\dedicatory{This paper is dedicated to the memory of our friend
  Giuseppe Da Prato} 
\keywords{Schr\"odinger problem, Interacting particles, Large
  deviations}
\subjclass[2020]{60F10, 60K35, 82C22}
\begin{document}

\begin{abstract}
  We formulate the Schr\"odinger problem
  for interacting particle
  systems in the hydrodynamical regime thus extending the standard
  setting of independent particles. This involves the large deviations
  rate function for the empirical measure which is in fact a richer
  observable than the hydrodynamic observables density and current.
  In the case in which the constraints are the initial and final
  density, we characterize the optimal measure for the Schr\"odinger
  problem.  We also introduce versions of the Schr\"odinger problem in
  which the constraints are related to the current and analyze the
  corresponding optimal measures.
\end{abstract}

\maketitle
\thispagestyle{empty}

\section{Introduction}

\subsection*{Schr\"odinger problem and large deviations}
According to the Schr\"odinger original formulation \cite{Sc}, his
problem amounts to the following.
``Imaginez que vous observez un syst\'eme de particules en diffusion, qui
soient en \'equilibre thermodynamique. Admettons qu'\`a un instant donn\'e
$t_0$ vous les ayez trouv\'ees en r\'epartition \'a peu pr\`es uniforme et
qu'\'a $t_1 > t_0$ vous ayez trouv\'e un \'ecart spontan\'e et consid\'erable
par rapport \`a cette uniformit\'e. On vous demande de quelle mani\`ere
cet \'ecart s'est produit. Quelle en est la mani\`ere la plus probable?''

A simple modelization of the ``syst\'eme de particules en diffusion''
is provided by a collection of $N$ independent Brownians to be
considered in the thermodynamic limit $N\to \infty$. The associated
natural observable is the empirical measure, defined by
\begin{equation}
  \label{em0}
  \ms R^N  = \frac 1N \sum_{i=1}^N \delta_{X^i(\cdot)}
\end{equation}  
where $X^i(t)$, $t\in[0,T]$, $i=1,\dots,N$ are the trajectories of the
particles in the time interval $[0,T]$. Note that $\ms R^N$ is a
random probability in the space of paths. We would then like to
predict the most probable behavior of $\ms R^N$ under the constraint
of the distributions of the particles at the initial and final times.
By large deviations theory, such behavior in the limit as $N\to
\infty$ is determined by the minimization of the large deviations rate
function of the family $(\ms R^N)_{N}$ with the given constraints.  By
using Sanov's theorem, which describes the large deviations of the
empirical measure for independent variables, the Schr\"odinger problem
is then formulated as a minimization of the relative entropy with
respect to the Wiener measure, see \eqref{rent}, under the constraint
of the initial and final distributions.  This procedure is referred to
as the Gibbs conditional principle.

The mathematical formulation of the Schr\"odinger problem is the
following. Fix $T>0$ and let $W$ be the distribution of a stationary
Brownian motion which we consider as the reference measure. Denote by
\begin{equation}
  \label{rent}
\Ent(R|W) = \int \!dW \, \frac {dR}{dW}\log   \frac {dR}{dW}\
\end{equation}
the relative entropy of the probability $R$ on the space of paths on
the time interval $[0,T]$ with respect to $W$.
For $t\in[0,T]$ we denote by $R_t$ the marginal at time $t$ of $R$,
that is $R_t$ is the probability on the configuration space that
describes the distribution at time $t$. 
Accordingly, the constraint on the initial and final distribution of
the particles is specified by the probabilities $\mu_0$ and $\mu_1$
on the configuration space. 
The solution to the the Schr\"odinger problem is then the optimal
probability for
\begin{equation}
  \label{ssip0}
  \inf\big\{ \Ent(R|W) ,\; R_0=\mu_0, R_T=\mu_1\big\}.
\end{equation}

We refer to \cite{Lrev} for a exhaustive review which illustrates the
rich mathematical structure of the Schr\"odinger problem.
In particular, this problem can be seen as an entropic regularization
of optimal transport and it is particularly apt to numerical
schemes. For these reasons, it has recently attracted much attention
within machine learning algorithms.

We here are mainly interested in relaxing the independence assumption
in the formulation \eqref{ssip0} and in considering versions of the
Schr\"odinger problem for interacting particles with different
constraints.
The case of a collection of Brownians with mean field interaction in
the It\^o-McKean regime has been considered in \cite{CL}, but the
present focus is the case of short range interactions in the
hydrodynamical scaling limit. This topic has been introduced in
\cite{CC} but there the variational problem is formulated in terms of
the hydrodynamic observables rather than the empirical measure.

\subsection*{Rate function for the empirical measure in the
  hydrodynamical regime} 

As in the case of independent particles, by large deviations theory,
the Schr\"odinger problem is naturally formulated as a minimization of
the rate function describing the fluctuations of the empirical
measure. Since particles are interacting, Sanov's theorem cannot be
applied we need to find the relevant cost functional.

When we consider stochastic interacting particle systems, the
hydrodynamical observables are the macroscopic density of particles
$\rho$ and the associated current $j$; by conservation of the number
of particles, the continuity equation $\partial_t \rho + \nabla \cdot
j =0$ holds.  The dynamic large deviations of these observables are
described by the functional
\begin{equation}
  \label{rfh0}
  I_\mathrm{dyn}(\rho,j) =\frac 14 \int_0^T\!dt\!\int\!dx\,
  \frac { | j+ D_\mathrm{h}(\rho) \nabla\rho |^2}{\sigma(\rho)}
\end{equation}
Here, the relevant transport coefficients are $\sigma$, the
\emph{mobility}, and $D_\mathrm{h}$, the \emph{hydrodynamic
diffusion}; in general they are symmetric matrices but we here
assume for simplicity that they are multiple of the identity. In terms
of the microscopic dynamics, the mobility is defined via a Green-Kubo
formula \cite{KL,Sp} and the diffusion satisfies the Einstein
relationship $D_\mathrm{h} = f'' \sigma$ where $f$ is the free energy
per unit of volume. In the case of independent particles we simply have
$\sigma(\rho)=\rho$ and $D_\mathrm{h}$ is constant.
The functional $I_\mathrm{dyn}$ is at the basis of the macroscopic
fluctuation theory \cite{BDGJLrev}.

As discussed by Quastel, Rezakhanlou, and Varadhan for the exclusion
process \cite{QRV}, the dynamical contribution to the rate function
for the empirical measure, as defined in \eqref{em0}, in the
hydrodynamical scaling limit is given by
\begin{equation}
  \label{qrv0}
  \mf F_\mathrm{dyn}(R)
  = I_\mathrm{dyn}\big( \rho^R, j^R \big)
  + \Ent\big( R \, \big|\, P(R)\,\big). 
\end{equation} 
Here $(\rho^R,j^R)$ is the pair density/current corresponding to the
measure $R$, in particular $\rho^R_t$ is the density of $R_t$, and
$P(R)$ is a Markovian measure on the space of paths with single time
marginals and expected current given by $(\rho^R,j^R)$. We refer to
the discussion in Section~\ref{s:rf} for the precise definition which
involves the \emph{self-diffusion} $D_\mathrm{s}$ that encodes the
asymptotic variance of a tagged particle.  The first term on the right
hand side of \eqref{qrv0} describes the fluctuations of the
hydrodynamical observables, that could be thought as Eulerian
observables, while the second term takes into account the actual path
of single particles, so that it could be thought as reflecting a
Lagrangian viewpoint.  As follows directly from \eqref{qrv0}, when $R$
varies within a suitable class of Markovian measures then the value
$\mf F_\mathrm{dyn}(R)$ can be obtained from the hydrodynamical rate
function in \eqref{rfh0}. On the other hand, with probability
exponentially small in the total number of particles it is also
possible to realize deviations of the empirical measure that are not
Markovian.  In this respect, the functional in \eqref{qrv0} is the
proper extension to interacting particles in the hydrodynamical
scaling limit of $\Ent(\cdot|W)$; indeed, as we here show, for
independent particles it reduces to that functional.

\subsection*{Standard Schr\"odinger problem for interacting particles}

The Schr\"odinger problem for stochastic lattice gases has been
originally introduced by Chiarini, Conforti, and Tamanini \cite{CC} as
a minimization of the hydrodynamical rate function \eqref{rfh0} with
the constraint on the initial and final densities. The corresponding
minimizer solves the canonical equations corresponding to the
Hamiltonian structure obtained by regarding $I_\mathrm{dyn}$ as an
action functional \cite{BDGJLrev}.

The main novel point of the present
work is the formulation of this problem as a minimization problem on
the path measures rather than hydrodynamical observables, that is as
\begin{equation}
  \label{ss0}
  \inf\big\{
  \mf F_\mathrm{dyn} (R) ,\; R_0=\mu_0, \, R_T=\mu_1\big\}.
\end{equation}  
Within this setting, such optimization actually provides an extension
of the classical Schr\"odinger problem \eqref{ssip0}.
We here characterize the optimal measure for \eqref{ss0} as a
time-inhomogeneous diffusion which can be realized as the typical
behavior of the empirical measure for the microscopic dynamics
perturbed by a suitable gradient external field.

A most interesting feature of the Schr\"odinger problem for
independent particles \eqref{ssip0} is that its solution can be
expressed in terms of a pair of dual potentials, see
e.g.\ \cite{Lrev}.  In same respects, as discussed in \cite{CC}, this
feature still holds for interacting particles, we here provide a
pathwise interpretation in terms of the time reversal of the optimal
measure for \eqref{ss0}.

\subsection*{Versions of the Schr\"odinger problem with constraints on
  the current}

According to the physical interpretation of interacting particle
systems as basic models for out of equilibrium processes, like mass
transportation, it appears natural both from a conceptual
point of view and in the description of an actual experiment to
consider versions of the Schr\"odinger problem with constraints on the
current. In the context of the hydrodynamical function \eqref{rfh0},
this issue has become a central topic in non-equilibrium statistical
mechanics \cite{BDGJLrev}. 

In the present work, we consider instead such optimization at the level
of path measures, the corresponding relevant functional being \eqref{qrv0}.
Referring to Section~\ref{s:spcec} for
other constraints, we here briefly discuss the case in which we
introduce a constraint on the time averaged expected current.
We thus consider the optimization
\begin{equation}
  \label{spec0}
  \frac 1T \inf\Big\{
  \mf F_\mathrm{dyn} (R) , \ 
  \frac 1T \int_0^T\!dt\, j^R_t = \bar J \Big\}
\end{equation}  
where the datum $\bar J$ is a given divergence free vector field and
we recall that $j^R$ denotes the expected current under $R$.

We here characterize the optimal measure for this problem in terms of a
variational problem on the hydrodynamical functional \eqref{rfh0}
and discuss its asymptotic behavior in the limit $T\to \infty$.  In
the case of interacting particles, in this asymptotics dynamical phase
transitions may occur, in the sense that the minimizer for
\eqref{spec0} exhibits a non-trivial time dependence. 
In particular, these phase transitions imply the breaking of
time translation invariance \cite{BDGJLptcur}.

\bigskip
Our aim is here to present the main conceptual points in formulating
the Schr\"o\-din\-ger problem for interacting particle and we are
going to be somewhat cavalier on the technical details.

\section{Empirical observables, hydrodynamical limit and tracer dynamics} 
\label{s:rf}

In this section we introduce the basic empirical objects associated to
a particle system, describe the
hydrodynamic limit, illustrate the behavior of tagged particles and recall a related
law of large numbers in \cite{QRV}.

\subsection*{Microscopic models and empirical observables}
As basic microscopic model we consider a system of interacting
particles evolving according to some stochastic Markovian dynamics,
which we assume to be reversible with respect to a Gibbs measure.
More specifically, these systems are either stochastic
lattice gases, in which particles perform interacting random walks on
the lattice, or interacting Brownians.  We assume that the particles
move on the $d$-dimensional torus $\bb T^d_\ell$ of side
$\ell$, that is $\bb T^d_\ell := \big( \bb R / (\ell\bb Z)\big)^d$,
$\ell\in\bb N$.  The corresponding paths of the $N$ particles are
denoted by $(X^i(t))_{t\ge 0}$, $i=1,\ldots,N$.  In the case of
lattice gases the path is piecewise constant and takes values in
$\bb T^d_\ell \cap \bb Z^d$ while it is a continuous trajectory in the
case of interacting Brownians.  In the context of hydrodynamic limits,
we are interested in the case of a fixed density, i.e.\ in the limit
in which $N,\ell$ diverge and $N\ell^{-d}$ converges to a
non-vanishing density $m\in (0,\infty)$.

\subsubsection*{Empirical density}
Given a macroscopic time interval $[0,T]$, the \emph{empirical density}
is defined as the density of particles in a `small'
macroscopic volume. More precisely, we set
\begin{equation}
  \label{ed}
  \pi^\ell_t := \frac 1{\ell^d} \sum_{i=1}^N
  \delta_{\frac 1{\ell} X^i({\ell^2\,t})},
    \qquad t\in[0,T]  
 \end{equation}
where we have performed the diffusive rescaling of space and time. 
We understand that $(\pi^\ell_t)_{t\in[0,T]}$ is a random element in the
Skorokhod space $D([0,T];\ms M_+ (\bb T^d))$ where hereafter
$\ms M_+(\mc X)$ denotes the set of positive finite Borel measures on the
Polish space $\mc X$  endowed with the topology induced by weak convergence.
As the particles labels are not relevant for \eqref{ed}, in the case
of stochastic lattice gases the empirical density can be deduced from
the corresponding occupation numbers.
Denoting by $\eta_x(t)$ the number of particles at the site $x$ at
time $t\ge 0$, we in fact have
\begin{equation*}
  \pi^\ell_t := \frac 1{\ell^d}
  \sum_{x\in\bb T^d_\ell\cap \bb Z^d} \eta_x({\ell^2t})
  \delta_{\frac{x}{\ell}},     \qquad t\in[0,T].  
\end{equation*}

\subsubsection*{Empirical current}

The \emph{empirical current} is the observable of the microscopic
dynamics that accounts for the displacement of the particles. It is a
random element $\ms J^\ell$ in a suitable space of distributions $\mf J$ dual
to the smooth functions $\omega\colon [0,T]\times \bb T^d\to \bb R^d$.
In the lattice gases case it is defined by taking the spatial average
of the net flow of particle across the edges of
$\bb T^d_\ell\cap \bb Z^d$, we refer e.g.\ to
\cite{BDGJLrev,Sp} for the precise definition.
For interacting Brownians it is the empirical average
\begin{equation}
  \label{ec1}
  \ms J^\ell = \frac 1{\ell^d} \sum_{i=1}^N \mc J_i^\ell
\end{equation}
where $(\mc J_i^\ell)_{i=1,\ldots,N}$ are the \emph{stochastic currents},
in the sense of \cite{FGGT}, associated to the particles, i.e.\ for
smooth $\omega\colon [0,T]\times \bb T^d\to \bb R^d$
\begin{equation}
  \label{ec2}
  \mc J_i^\ell (\omega) = \frac 1\ell
  \int_0^{T}\! \omega_t(\ell^{-1}X^i(\ell^2 t))\circ dX^i(\ell^2 t),
  \qquad i=1,\ldots,N,
\end{equation}
where $\circ$ denotes the Stratonovich integral and we have performed
the diffusive rescaling of space and time. 
In both cases, in view of the chain rule for the Stratonovich
integral, the continuity equation
\begin{equation}
  \label{ce}
  \partial_t \pi^\ell +\nabla\cdot \ms J^\ell =0
\end{equation}
holds weakly with probability one with respect to the microscopic dynamics.

\subsubsection*{Empirical measure}
The \emph{empirical measure} is a finer observable of the microscopic
dynamics which retains the information on time correlations also
in the limit of infinitely many particles. It is defined by
\begin{equation}
  \label{em}
  \ms R^\ell := \frac 1{\ell^d} \sum_{i=1}^N
  \delta_{\frac 1{\ell} X^i(\ell^2 \cdot)}
\end{equation}
that, given $T>0$, we regard as a random element in
$\ms M_+ \big( D([0,T];\bb T^d)\big)$.  While the empirical density
$\pi^\ell$ is random path on the space of measures, the empirical
measure $\ms R^\ell$ is a random measure on the space of paths;
accordingly, the $\delta$ measures in \eqref{ed} are measures on
$\bb T^d$ (points) while those in \eqref{em} are measures on
$D([0,T];\bb T^d)$ (paths).

The empirical density $(\pi^\ell_t)_{t\in[0,T]}$ can be recovered from
the empirical measure by considering the family of the single time
marginals of $\ms R^\ell$.  In order to recover also the empirical
current, for a path $X\colon [0,T]\to \bb T^d$, let $\mc J(X)\in \mf J$ the
corresponding stochastic current. For a continuous path, as in the
case of interacting Brownians, $\mc J(X)$ is defined, according to
\cite{FGGT}, by requiring that for smooth $\omega\colon
[0,T]\times\bb T^d\to \bb R^d$ 
\begin{equation}
  \label{stcur}
  \big(\mc J(X)\big) (\omega) = \int_0^T\!\omega_t(X_t)\circ dX(t).
\end{equation}
For piecewise constant paths, as in the case of stochastic lattice
gases,  $\mc J(X)$ is instead defined, as in \cite{BGL}, by looking at the
net flow of particles. 
In either case, as follows directly from \eqref{ec1}--\eqref{ec2} and \eqref{em}
\begin{equation}
  \label{scfem}
  \ms J^\ell = \int \! \ms R^\ell(dX) \, \mc J (X).
\end{equation}

To illustrate the difference between the empirical density and measure,
it is instructive to compare two models, the standard simple
exclusion, in which the interaction is specified by a hard core
constraint, and the one in which the particles exchange their
position, that can be obtained from the so-called stirring process \cite{Gri}.
The hydrodynamical observables, empirical density and current, are the
same for both models while the empirical measure is clearly different.

\subsubsection*{Empirical stochastic current}
Finally we can define another empirical observable, that we refer to
as the \emph{empirical stochastic current} and denote by
$\hat{\ms J}^\ell$.
It is the random element of $\ms M_+(\mf J)$ defined by
\begin{equation}\label{def-tagged}
  \hat{\ms J}^\ell= \ms R^\ell\circ \mc J^{-1}\,,
\end{equation}
with $\mc J =\mc J(X)$ as defined in \eqref{stcur}.
Denoting by $J$ a generic element of $\mf J$,
we have $\ms J^\ell=\int\! \hat{\ms J}^\ell(dJ)\, J$, that is the
expectation of the empirical stochastic current is the empirical current.

We remark that, as before, the empirical current is the same when the
exclusion process is defined either by hard core or by stirring, while
instead the empirical stochastic current is different in the two cases.

\subsection*{Hydrodynamical limit and tracer dynamics}

The hydrodynamical limit a\-moun\-ts to the law of large numbers for the
empirical observables introduced before.  We assume that at the
initial time they converge to a deterministic limit and want to describe
the corresponding limiting evolution.

\subsubsection*{Hydrodynamical limit for the empirical density}
This is the standard formulation of the hydrodynamical limit.
Referring to the monographs \cite{KL,Sp} for a detailed exposition,
under suitable assumptions on the initial distribution of particles,
the empirical density $(\pi^\ell_t)_{t\in[0,T]}$
converges in probability to the absolutely continuous measure
$(\rho_t\,dx)_{t\in[0,T]}\in C\big([0,T];\ms M_{+} (\bb T^d)\big)$
where $dx$ denotes the uniform measure on $\bb T^d$ and
$(\rho_t)_{t\in[0,T]}$ solves the non-linear diffusion equation
\begin{equation}
  \label{nde}
  \partial_t \rho= \nabla\cdot \big( D_\mathrm{h}(\rho)\nabla \rho\big).
\end{equation}
The \emph{hydrodynamical diffusion coefficient} $D_\mathrm{h}$
can be obtained from the microscopic dynamics by a Green-Kubo
formula \cite{KL,Sp}.
While $ D_\mathrm{h}$ is in general a positive 
symmetric $d\times d$ matrix, for simplicity we here assume that it is
a multiple of the identity.

\subsubsection*{Hydrodynamical limit for the empirical density and
  current} 
As discussed in \cite{BDGJLldcur} for the case of the simple exclusion
process, see also \cite{BGL} for a more accurate discussion on the
tightness of the empirical current, the hydrodynamical limit can be
also formulated in term of the empirical density and current.
More precisely, as $\ell\to \infty$ the pair
$\big((\pi^\ell_t)_{t\in[0,T]}, \ms J^\ell\big)$ converges in
probability to 
$\big((\pi_t)_{t\in[0,T]}, \ms J\big)$ where, as before,
$\pi_t=\rho_t\,dx$, $t\in [0,T]$ and $\ms J$ can be represented by a
$L^2$ vector field, i.e.\ there exists $j\in L^2\big([0,T]\times \bb
T^d;\bb R^d)$ such that for any smooth $\omega \colon [0,T]\times \bb T^d\to
\bb R^d$ 
\begin{equation}
  \label{rJl}
  \ms J(\omega) = \int_0^T\!dt \!\int\!dx\, j_t(x) \cdot \omega_t(x)
\end{equation}
in which $\cdot$ denotes the inner product in $\bb R^d$.
Finally, the pair  $(\rho_t,j_t)_{t\in[0,T]}$ satisfies
\begin{equation}
  \label{hlrj}
  \begin{cases}
    \partial_t \rho +\nabla\cdot j =0, \\
    j= -D_\mathrm{h}(\rho)\nabla\rho.
  \end{cases}
\end{equation}
In which we emphasize that the continuity equation holds exactly also
when $\ell$ is finite while the Fick's law
$j=-D_\mathrm{h}(\rho)\nabla\rho$ is achieved only in the limit
$\ell\to\infty$.

\subsubsection*{Equilibrium tracer dynamics}

In the standard formulation, the tracer dynamics is specified by
selecting a distinguished particle, the so-called \emph{tagged
  particle}, and looking for its statistics in the limit of infinitely
many particles.  When the other particles are sampled according to the
equilibrium (homogeneous) distribution with density $m\in(0,\infty)$,
by applying the general framework in \cite{KV}, it has been shown that
- under diffusive rescaling - the position of the tagged particles
converges to a Brownian motion with diffusion coefficient
$D_\mathrm{s}(m)$ that can be identified from the microscopic dynamics
by a suitable variational formula.  We refer to the monographs
\cite{KLO,Sp} for a detailed exposition.
In the case of the simple exclusion process, $D_\mathrm{s}$ vanishes
in the one-dimensional case for nearest neighbors jumps. Indeed, in
this case the variance of the tagged particles is of the order
$\sqrt{t}$ \cite{Ar}.
On the other hand both for $d\ge 2$ and for $d=1$ with next nearest
neighbors jumps, $D_\mathrm{s}>0$ \cite{Sp}.    
In general, the \emph{self-diffusion} coefficient $D_\mathrm{s}$
cannot be expressed in a closed form, a notable exception, as next
discussed, is the zero-range process.  We finally remark that for the
stirring process we simply have $D_\mathrm{h}=D_\mathrm{s}=1$.

\subsubsection*{Law of large numbers for the empirical measure}
In the context of hydrodynamical scaling limits, a detailed proof of
this result has been carried out in the case of the exclusion process
\cite{Q1,R}. The corresponding statement next outlined should however
be of general validity.

We denote by $\ms R^\ell_0$ the marginal distribution at the initial
time of the empirical measure that in fact coincides with
$\pi^\ell_0$. We assume the initial distribution of the particles is
chosen so that $\ms R^\ell_0$ converges in probability to a non-random
measure $\bar\rho\, dx$ on $\bb T^d$ which is absolutely continuous
with respect to $dx$ and denote by $m \in(0,\infty)$ its total
mass.  The law of large numbers for the empirical measure $\ms R^\ell$
is the non-random measure $P^{\bar\rho}$ on $C([0,T];\bb T^d)$ of mass
$m$ that is next described as a the law of a \emph{non-linear} Markov
diffusion process.

For a Polish space $\mc X$, we denote by $\ms M_m(\mc X)\subset\ms
M_+(\mc X)$  the set of positive measures with mass $m$.
Given $P \in \ms M_+ \big( C([0,T];\bb T^d) \big)$ we further denote
by $P_{t} \in \ms M_+ (\bb T^d)$, $t\in [0,T]$, the family of its
single time marginals.  Then the measure $P^{\bar\rho}\in \ms M_m \big(
C([0,T];\bb T^d) \big)$ is the law of the  time-inhomogeneous
diffusion with time dependent generator \cite{QV} 
\begin{equation}
  \label{tdg}
    L_t = \nabla \cdot D_\mathrm{s} (\rho_t) \nabla 
    + \big[ D_\mathrm{s} (\rho_t) -  D_\mathrm{h} (\rho_t)\big]
    \frac {\nabla \rho_t}{\rho_t} \cdot \nabla 
\end{equation}
and initial condition $(P^{\bar\rho})_0 =\bar\rho \, dx$. Here, the family of 
densities  $(\rho_t)_{t\in [0,T]}$ is determined by the compatibility
condition $(P^{\bar\rho})_t = \rho_t\, dx$. In particular,
$P^{\bar\rho}$ is characterized as the solution to a fixed point problem.

By a direct computation, the density of the single time
marginal of $P^{\bar\rho}$ evolve according to the non-linear
diffusion equation \eqref{nde}. 
This implies that, alternatively to the fixed point characterization,
the measure $P^{\bar\rho}$ can be described as the law of the
time-inhomogeneous diffusion with initial distribution $\bar\rho\,dx$
and time-dependent generator \eqref{tdg} where $(\rho_t)_{t\in[0,T]}$
is the solution to the hydrodynamic equation \eqref{nde} with initial
condition $\bar\rho$.

\smallskip
As an argument to justify formula \eqref{tdg} we show that, for fixed
$\bar\rho$, \eqref{tdg} is selected among diffusion processes by
imposing three natural conditions.  Consider a general time dependent
diffusion on $\mathbb T^d$ having diffusion matrix $D=D(x,t)$ and
drift $b=b(x,t)$. Its time dependent generator has the form
\begin{equation}\label{gen-diff}
  \mathcal L f= \tr \big( D \, \partial^2 f \big)
  +b\cdot \nabla f
\end{equation}
where $\partial^2 f$ denotes the Hessian of $f$. The three conditions
are the following; the first two  are indeed strict constraints
related to the definitions of the empirical measure, density and
current.  
\begin{itemize}
\item For each $t\in[0,T]$ the density of $(P^{\bar\rho})_t$ coincides
  with the solution at time $t$ to \eqref{nde} with initial condition
  $\bar\rho$.
\item The expected current $\int \! P^{\bar\rho}(dX) \, \mc J (X)$
  coincides with the hydrodynamic current and is therefore represented
  by the $L^2$ vector field $j=-D_{\mathrm{h}}(\rho)\nabla \rho$,
  where $\rho$ is the solution of \eqref{nde} with initial condition
  $\bar\rho$.
\item The value $D(x,t)$ of the diffusion coefficient at the
  space-time point $(x,t)$ in \eqref{gen-diff} is given by
  $D_{\mathrm{s}}(\rho_t(x))$ where $\rho$ is the solution of
  \eqref{nde} with initial condition $\bar\rho$.
\end{itemize}
Recall that a diffusion process having generator \eqref{gen-diff} and
with density of the distribution at time $t$ given by $\alpha_t$, has
an expected current with a representative given by $-\nabla
\cdot \big(D\,\alpha\big) +\alpha \,b$ (see \cite{FGGT}); where
$\nabla \cdot \big(D\alpha\big)$ has components
$\sum_j\partial_{x_j}\left(D_{i,j}\alpha\right)$, $i=1,\dots,d$.  The
first condition implies therefore that the density of the expected
current is
\begin{equation}\label{intju}
  -\nabla \cdot \big(D\,\rho\big) +\rho \,b
\end{equation}
The third condition imposes that in \eqref{intju} we have
$D=D_{\mathrm{s}}(\rho)$ and finally the second condition gives the
relationship
\begin{equation*}
-\nabla \cdot \big(D_{\mathrm{s}}(\rho)\,\rho\big) +\rho
\,b=-D_{\mathrm{h}}(\rho)\nabla \rho\,, 
\end{equation*} 
that identifies uniquely the vector $b$ and gives as a result the
diffusion with generator \eqref{tdg}.

\subsubsection*{Law of large numbers for the empirical stochastic
  current}
In view of \eqref{def-tagged} and the law of large numbers for the
empirical measure, we deduce that as $\ell \to \infty$ the empirical
stochastic current $\hat {\ms J}^\ell$ converges to $P^{\bar\rho}\circ
\mc J^{-1}$, with $\mc J$ as defined in \eqref{stcur}. 
That is, the limit of empirical stochastic current is the stochastic
current associated to the non-linear Markov diffusion
$P^{\bar\rho}$. On the technical side we mention that, as the
map $X\mapsto \mc J(X)$ is not continuous, the previous statement is
not a direct consequence of the law of large numbers for the empirical
measure. We do not however discuss here the details.


\section{Large deviations}

The main aim of this section is to introduce the functional derived in
\cite{QRV} as the large deviation rate function for the empirical
measure associated to the exclusion process in the hydrodynamical
scaling limit. The versions of the Schr\"odinger problem that we will
next consider will be in fact formulated as minimization problems of
this functional.
We do not even attempt to sketch the arguments in \cite{QRV}, that
rely on the heavy machinery of hydrodynamical limits for 
non-gradient models and on an inductive limit procedure, but
we do not limit the discussion to the case of the exclusion process
and consider general interacting particle systems.  With respect to
the original formulation in \cite{QRV}, following the general ideology
of \cite{BDGJLrev}, we emphasize the role played by the current.
We here also show that the recipe in \cite{QRV} can be applied beyond
the context of hydrodynamical limits, in particular to the case of
mean field interacting Brownians for which it outputs the functional
derived in \cite{CL}.
Finally, in the case of the zero range process, we obtain a simpler
expression for the functional in \cite{QRV}, which has the same flavor
as the one derived for mean field interacting Brownians.

\subsection*{Large deviations for the empirical density and current} 
Since the original derivation in \cite{KOV}, the large deviation
principle for the empirical density has been achieved for several
models, we refer to the bibliographic remarks in \cite{KL}.
As discussed in \cite{BDGJLrev}, the corresponding rate functional can
be formulated by considering the joint large deviations of the
empirical density and current. Referring to \cite{BDGJLldcur,BGL} for
the actual derivation of this functional in the context of the
exclusion process, we next present its general form.

To describe the rate function, we need another transport coefficient,
the so-called \emph{mobility}, which encodes the linear response of
the interacting particle system to an applied external field. If we
denote by $E=E_t(x)$ the applied field and by $\rho=\rho_t(x)$ the
local density then the induced local drift is $\sigma(\rho) E$, with
$\sigma$ being the mobility coefficient. As for the hydrodynamical
diffusion coefficient, $\sigma$ is a $d\times d$ symmetric positive
matrix but we here assume that is a multiple of the identity.
The local reversibility of the microscopic dynamics imply that
$D_\mathrm{h}$ and $\sigma$ are related by the \emph{Einstein
  relationship}
\begin{equation}
  \label{er}
  D_\mathrm{h}(\rho) = f''(\rho) \sigma(\rho), \qquad \rho\in
  [0,\infty), 
\end{equation}
where $f$ denotes the free energy per unit of volume associated to the
underlying Gibbs measure. From a mathematical viewpoint, as discussed
in \cite{Sp}, the Green-Kubo formula mentioned above really provides a
variational formula for the mobility $\sigma$ and \eqref{er} is then
the appropriate definition of the hydrodynamical diffusion coefficient. 

Recall that $\mf J$ is the space of distributions in which the empirical
current takes its value. The \emph{hydrodynamical rate function} is the map
$I\colon D\big([0,T]; \ms M_+(\bb T^d)\big)\times \mf J \to
[0,+\infty]$ defined as follows.  It is infinite unless
$\pi \in C\big([0,T]; \ms M_+(\bb T^d)\big)$ with
$\pi_t =\rho_t \,dx$, $t\in[0,T]$, and $\ms J \in \mf J$ can be
represented by a $L^2$ vector field $j$ as in \eqref{rJl} which
satisfy weakly the continuity equation
$\partial_t \rho +\nabla\cdot j =0$.
In this case $I(\pi,\ms J)$ is given by
\begin{equation}
  \label{rfh}
  I(\pi,\ms J) =I_\mathrm{in}(\rho_0) + I_\mathrm{dyn}(\rho,j)
\end{equation}
where $I_\mathrm{in}$ takes into account the initial distribution of
the particles and 
\begin{equation}
  \label{rfhdyn}
  I_\mathrm{dyn}(\rho,j) =\frac 14 \int_0^T\!dt\!\int\!dx\,
  \frac { | j+ D_\mathrm{h}(\rho) \nabla\rho |^2}{\sigma(\rho)}.
\end{equation}

In general, $I_\mathrm{in}$ is the large deviation rate function for
the family $\big(\pi^\ell_0\big)_{\ell>0}$ when the particles are
sampled according to the specified initial datum; we discuss its
explicit form only for two special choices of the initial distribution
of the particles.  If we consider a \emph{deterministic} choice such
that the corresponding empirical density $\pi^\ell_0$ converges to
$\bar\rho\, dx$ then $I_\mathrm{in}(\rho_0)$ is infinite unless
$\rho_0=\bar\rho$ and zero in this case.  If we distribute the
particles according to the Gibbs measure with density $m$ then
\begin{equation}
  \label{Iincan}
  I_\mathrm{in}(\rho_0)= \int\!dx \big[ f(\rho_0)-f(m) -f'(m)
  (\rho_0-m)\big] 
\end{equation}
where we recall that $f$ denotes the free energy per unit of volume. 

The heuristic argument leading to \eqref{rfhdyn} is the
following. Consider the microscopic dynamics perturbed by an external
field $2E=2E_t(x)$. Its typical behavior is then described by the
perturbed hydrodynamical equation
\begin{equation}
  \label{phe}
  \begin{cases}
    \partial_t \rho +\nabla\cdot j =0, \\
    j= -D_\mathrm{h}(\rho) \nabla\rho +2\sigma(\rho) E.
  \end{cases}
\end{equation}
We next choose $E$ so that $(\rho,j)$ is the prescribed path and
compute the work done by the external field,
\begin{equation}\label{phe-I}
  I_\mathrm{dyn}(\rho,j) = \int_0^T\!dt\int\!dx\,
  \sigma(\rho) |E|^2.
\end{equation}

\subsection*{Large deviations for the empirical measure} 
In the context of the hydrodynamical limit this issue has been
originally discussed in the case of the simple exclusion process by
Quastel, Rezakhanlou and Varadhan \cite{QRV}.  The proof is carried out
when $d\ge 3$ as it is needed a regularity property of the
self-diffusion $D_\mathrm{s}$ that it has been proven only in that
case. The case of one-dimensional Brownians interacting by partial
reflection is discussed in \cite{Seo}. By using the results in
\cite{GJL}, the large deviations principle can be obtained also for
the zero range process. As discussed in the review \cite{Vrev}, the
rate function in \cite{QRV}, that we next introduce, should be of
general validity.

Recall that, as defined in \eqref{stcur}, $\mc J(X)$ is the stochastic
current associated to the path $X\in C\big([0,T];\bb
T^d)$. Furthermore, $(R_t)_{t\in[0,T]}\in C\big([0,T];\mathcal M_+(\bb
T^d)\big)$ is the family of the single time marginals of
$R\in \ms M_+\big(C\big([0,T];\bb T^d)\big)$.  Letting
$\ms J^R :=\int \! R(dX) \, \mc J(X)$ be the expectation of the
stochastic current, then the continuity equation
$\partial_t R_t +\nabla\cdot \ms J^R=0$ holds weakly.  Assume that
$R_t =\rho_t \,dx$, $t\in [0,T]$, and $\ms J^R$ can be represented by
the $L^2$ vector field $j^R$; we then let $E^R$ be the external field
such that $j^R = -D_\emph{h}(\rho) \nabla\rho + 2 \sigma(\rho) E^R$.
We then denote by $P(R)\in \ms M_+\big(C\big([0,T];\bb T^d)\big)$ the
law of the time-inhomogeneous diffusion with time dependent generator
\begin{equation}
  \label{tdgp}
    L^R_t = \nabla\cdot D_\mathrm{s} (\rho_t) \nabla 
    + \Big\{
    \big[ D_\mathrm{s} (\rho_t) -  D_\mathrm{h} (\rho_t)\big]
    \frac {\nabla \rho_t}{\rho_t} 
    + 2 \frac {\sigma(\rho_t)}{\rho_t} E^R_t \Big\}\cdot \nabla
\end{equation}
and initial condition $\big(P(R)\big)_0 =R_0$.
Observe that, as follows by a direct computations, $R$ and $P(R)$ have
the same family of single time marginals which in fact evolve
according to the perturbed hydrodynamical equation \eqref{phe}.
On the other hand, while $P(R)$ is Markovian $R$ is not required to be
Markovian.  The measure $P(R)$ can thus be regarded as a Markovian
measure with the same single time marginals and same expected current as
$R$ and, following the same arguments after \eqref{tdg}, can be characterized as
the unique diffusion process \eqref{gen-diff} with these two features and having
a time dependent diffusion coefficient such that $D(t)=D_{\mathrm{s}}(\rho_t)$.  
Finally, if $E^R=0$ then $P(R)=P^{\rho_0}$ is the measure
obtained as the law of large numbers for the empirical measure.

The \emph{QRV functional} is the map
$\mf F \colon \ms M_+\big(D\big([0,T];\bb T^d)\big) \to [0,+\infty]$
defined as follows. It is infinite unless
$R \in \ms M_+\big(C\big([0,T];\bb T^d)\big)$ and in this case it is
given by
\begin{equation}
  \label{rfqrv}
  \mf F(R) = I\big( (R_t)_{t\in[0,T]}, \ms J^R \big)
  + \Ent\big( R \, \big|\, P(R)\,\big) 
\end{equation}
where $I$ is the rate function introduced in \eqref{rfh} and
$\Ent\big(\mu|\nu) =\int\! d\mu \log \frac{d\mu}{d\nu}$ is the
relative entropy between the positive measures $\mu$ and $\nu$ which
have the same mass.
Note that, as follows from the definition of the hydrodynamical rate
function $I$, the measure $P(R)$ is well defined when
$I((R_t)_{t\in[0,T]}, \ms J_R \big) <+\infty$.  As proven in
\cite{QRV} in the case of the exclusion process, $\mf F$ is the large
deviation rate function of the family $\big(\ms R^\ell\big)_{\ell>0}$.

We remark that if the measure $R$ is Markovian then $\mf F(R)$ is
finite only if $R$ is absolutely continuous with respect to
$P(R)$ and, if this is the case, then $R$ is actually equal to $P(R)$
so that $\mf F(R) = I\big( (R_t)_{t\in[0,T]}, \ms J^R
\big)$. Therefore, the second term on the right hand side of
\eqref{rfqrv} is relevant only to describe the occurrence of
non-Markovian measures.

As readily follows from \eqref{rfqrv}, the hydrodynamic rate function
$I$ can be recovered from the QRV rate function $\mf F$ by
projection. 
On the technical side we mention however that, as the map
$R\mapsto \ms J^R$ is not continuous, the corresponding large
deviations principle cannot really obtained by contraction and the
construction of suitable exponentially good approximation of the
empirical current is needed.

\subsection*{The QRV functional for the zero range process}

The zero range process, see e.g.\ \cite{KL}, is a stochastic lattice
gas in which the rate at which a particles jumps from the site $x$ to
a neighboring site $y$ depends only on the number of particles at
$x$; more precisely if there are $k$ particles at site $x$ the jump
rate is $g(k)$, $k\in\bb Z_+$. It is reversible with respect to the
one-family product probability measures $(\nu_\varphi)_{\varphi\ge 0}$
on $(\bb Z_+)^{\bb T^d_\ell \cap \bb Z^d}$ with marginals
\begin{equation*}
  \nu_\varphi (\eta_x=k) =\frac 1{Z(\varphi)} \frac {\varphi^k}{g(k)!},
\end{equation*}
where we recall that $\eta_x$ are the occupations numbers, $Z(\varphi)$
is the appropriate normalization constant, and
$g(k)!= g(k)g(k-1) \cdots g(1)$.

We denote by $\phi\colon \bb R_+\to \bb R_+$ the function such that
$\sum_k k \, \nu_{\phi(\rho)} (\eta_x=k)=\rho$.
As discussed in \cite{KL}, to which we also refer for the required
conditions on the jump rates $g$,
the hydrodynamic transport coefficient are given by
$D_\mathrm{h}(\rho) = \phi'(\rho)$ and $\sigma(\rho)=\phi(\rho)$ while
the free energy per unit of volume $f$ satisfies
$f''(\rho) =\phi'(\rho)/ \phi(\rho)$. Observe that in the case $g(k)=k$,
$k\in \bb Z_+$, we have non-interacting symmetric random walks;
accordingly, $D_\mathrm{h}(\rho)=1$, $\sigma(\rho)=\rho$, and
$f(\rho)=\rho\log\rho$.
Another special feature of the zero range process is that the position
of the tagged particle is a martingale. By direct computations, see
e.g.\ \cite{JLS}, is then possible to express also the self-diffusion
coefficient in a closed form, $D_\mathrm{s}(\rho) = \phi(\rho)/\rho$
where we understand $D_\mathrm{s}(0)=1$.

In view of the peculiarities above described, for the zero range
process the QRV functional admits a simpler formula, somewhat
analogous to the rate function derived in \cite{CL} for mean field
interacting Brownians, in which we get rid both of the hydrodynamical
rate function and of the external field.  Given the family of
densities $\rho=(\rho_t)_{t\in[0,T]}$, we denote by
$Q(\rho)\in \ms M_+\big(C\big([0,T];\bb T^d)\big)$ the law of the
time-inhomogeneous diffusion on $\bb T^d$ with time dependent
generator
\begin{equation}
  \label{tdgzr}
  L^\rho_t = \frac{\phi(\rho_t)}{\rho_t} \Delta
\end{equation}
and initial condition $\rho_0 dx$.
Observe that the generator \eqref{tdg} describing the law of large
numbers for the empirical measures reduces to \eqref{tdgzr} in the
zero range case; we remark however that in \eqref{tdgzr} the density $\rho_t$ is not, unless $\rho_t$ solves the hydrodynamic equation,
the density at time $t$ of the diffusion process. Furthermore, as proven in \cite{JLS}, when
$\rho$ is the solution to the hydrodynamic equation, the
diffusion with generator \eqref{tdgzr} is the limiting distribution of
a tagged particle in non-equilibrium.  

\begin{lemma}
  \label{t:qrvzr}
  For the zero range process $\mf F\colon \ms M_+\big(
  D([0,T];\bb T^d)\big)\to [0,+\infty]$ is infinite unless $R\in\ms M_+\big(
  C([0,T];\bb T^d)\big)$, with $(R_t)_{t\in [0,T]}
  =(\rho_t \, dx)_{t\in [0,T]}$,
  and in this case
  \begin{equation}\label{2form-zr}
    \mf F(R) = I_\mathrm{in}(\rho_0)
    + \Ent\big(R \,\big|\, Q(\rho) \,\big).
  \end{equation}
\end{lemma}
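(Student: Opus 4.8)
The ``infinite unless'' part is immediate from the definitions: by \eqref{rfqrv}, $\mf F(R)=+\infty$ unless $R$ is supported on $C([0,T];\bb T^d)$, and then $\mf F(R)=I\big((R_t)_{t\in[0,T]},\ms J^R\big)+\Ent(R|P(R))$ with, by \eqref{rfh}, the first term infinite unless $R_t=\rho_t\,dx$; so we may reduce to $R$ of the stated form, for which $P(R)$ and, for the zero range process, $Q(\rho)$ are well defined. Recalling that $I\big((R_t)_{t\in[0,T]},\ms J^R\big)=I_\mathrm{in}(\rho_0)+I_\mathrm{dyn}(\rho,j^R)$, the content of the lemma is the identity $I_\mathrm{dyn}(\rho,j^R)+\Ent(R|P(R))=\Ent(R|Q(\rho))$; when $R$ is of the right form but $\mf F(R)=+\infty$, a separate check --- using the standard bound of the perturbing external field by the relative entropy --- shows $\Ent(R|Q(\rho))=+\infty$ as well, so the stated formula still holds.

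The first step is to use the peculiarities of the zero range process, namely $D_\mathrm{s}(\rho)=\phi(\rho)/\rho$, $D_\mathrm{h}(\rho)=\phi'(\rho)$ and $\sigma(\rho)=\phi(\rho)=\rho D_\mathrm{s}(\rho)$. Inserting these into the generator \eqref{tdgp} and expanding $\nabla\cdot D_\mathrm{s}(\rho_t)\nabla f=D_\mathrm{s}(\rho_t)\Delta f+\nabla D_\mathrm{s}(\rho_t)\cdot\nabla f$, the two first order terms proportional to $\nabla\rho_t$ cancel exactly --- the same cancellation already used after \eqref{tdgzr} in the case $E^R=0$ --- leaving
\[
  L^R_t=\frac{\phi(\rho_t)}{\rho_t}\,\Delta+2\,\frac{\phi(\rho_t)}{\rho_t}\,E^R_t\cdot\nabla .
\]
Thus $P(R)$ and $Q(\rho)$ are both diffusions on $\bb T^d$ with the \emph{same} diffusion coefficient $D_\mathrm{s}(\rho_t)=\phi(\rho_t)/\rho_t$ and the \emph{same} initial law $\rho_0\,dx$, the only difference being the extra drift $2D_\mathrm{s}(\rho_t)E^R_t$ carried by $P(R)$.

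Next I would compare the two measures by Girsanov's theorem. Assuming $I_\mathrm{dyn}(\rho,j^R)=\int_0^T\!dt\!\int\!dx\,\sigma(\rho_t)|E^R_t|^2<+\infty$ --- otherwise both sides of \eqref{2form-zr} are $+\infty$ --- this is precisely the integrability required, and Girsanov gives $P(R)\ll Q(\rho)$ with
\[
  \log\frac{dP(R)}{dQ(\rho)}(X)=\int_0^T E^R_t(X_t)\cdot dX_t-\int_0^T D_\mathrm{s}\big(\rho_t(X_t)\big)\,|E^R_t(X_t)|^2\,dt ,
\]
the first integral being of It\^o type. Since then $R\ll P(R)\ll Q(\rho)$, the chain rule for the relative entropy gives $\Ent(R|Q(\rho))=\Ent(R|P(R))+E_R\big[\log\frac{dP(R)}{dQ(\rho)}\big]$, so it remains to evaluate that expectation. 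Rewriting the It\^o integral as the Stratonovich one produces the correction $-\int_0^T D_\mathrm{s}(\rho_t(X_t))\,(\nabla\cdot E^R_t)(X_t)\,dt$; then $\int_0^T E^R_t(X_t)\circ dX_t=\big(\mc J(X)\big)(E^R)$, and using that $E_R\big[\big(\mc J(X)\big)(\omega)\big]=\ms J^R(\omega)=\int_0^T\!dt\!\int\!dx\,j^R_t\cdot\omega_t$ together with $R_t=\rho_t\,dx$ and $\rho_t D_\mathrm{s}(\rho_t)=\phi(\rho_t)=\sigma(\rho_t)$ one obtains
\[
  E_R\!\left[\log\frac{dP(R)}{dQ(\rho)}\right]
  =\int_0^T\!dt\!\int\!dx\,\Big[\,j^R_t\cdot E^R_t-\phi(\rho_t)\,\nabla\cdot E^R_t-\sigma(\rho_t)\,|E^R_t|^2\,\Big].
\]
An integration by parts on $\bb T^d$ turns $-\phi(\rho_t)\nabla\cdot E^R_t$ (integrated in $x$) into $\phi'(\rho_t)\nabla\rho_t\cdot E^R_t=D_\mathrm{h}(\rho_t)\nabla\rho_t\cdot E^R_t$, so the bracket becomes $E^R_t\cdot\big(j^R_t+D_\mathrm{h}(\rho_t)\nabla\rho_t\big)-\sigma(\rho_t)|E^R_t|^2$; since $j^R+D_\mathrm{h}(\rho)\nabla\rho=2\sigma(\rho)E^R$ by definition of $E^R$, this equals $\sigma(\rho_t)|E^R_t|^2$. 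Hence $E_R\big[\log\frac{dP(R)}{dQ(\rho)}\big]=\int_0^T\!dt\!\int\!dx\,\sigma(\rho_t)|E^R_t|^2=I_\mathrm{dyn}(\rho,j^R)$, which combined with the chain rule and $\mf F(R)=I_\mathrm{in}(\rho_0)+I_\mathrm{dyn}(\rho,j^R)+\Ent(R|P(R))$ yields \eqref{2form-zr}.

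The main obstacle I anticipate is this last step: making rigorous sense of the stochastic integral in the Girsanov density under the possibly non-Markovian $R$ --- i.e.\ identifying it with the stochastic current $\mc J(X)$ of \cite{FGGT} evaluated on $E^R$, and controlling the It\^o--Stratonovich correction --- when $E^R$ only enjoys the integrability provided by $I_\mathrm{dyn}(\rho,j^R)<+\infty$; this requires approximating $E^R$ by smooth vector fields and a limiting argument. One must also handle the region where $\rho_t$, and hence $D_\mathrm{s}=\phi/\rho$ and $\sigma=\phi$, degenerates, which is where finiteness of $I_\mathrm{in}$ and of $I_\mathrm{dyn}$ enters.
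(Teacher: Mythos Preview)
Your proof is correct and follows essentially the same route as the paper: both arguments use the chain rule $\Ent(R|Q(\rho))=\Ent(R|P(R))+E_R\big[\log\frac{dP(R)}{dQ(\rho)}\big]$, compute the Girsanov density (you start from the It\^o form and convert to Stratonovich, the paper writes the Stratonovich form directly, arriving at the same expression), and then evaluate its $R$-expectation via the stochastic current and an integration by parts to recognize $I_\mathrm{dyn}(\rho,j^R)$. Your additional remarks on the degenerate/infinite cases and on the regularity needed to justify the stochastic integral go slightly beyond the paper, which is explicitly ``cavalier on the technical details''.
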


A representation of the QRV functional analogous to the one in the
previous lemma can be obtained whenever $D_\mathrm{s}(\rho)$ is
proportional to $\sigma(\rho)/\rho$; however, apart from the
zero-range model, we are not aware of any model for which this
condition is fulfilled.
The representation of the rate function provided by
Lemma~\ref{t:qrvzr} clearly characterizes the zero level set of
$\mf F$ as the law of the non-linear Markov diffusion obtained as law
of large numbers for the empirical measure.  Furthermore, if we
consider the case of $N=\ell^d$ non-interacting particles whose
initial distribution is specified by sampling independently from the
uniform probability on $\bb T^d_\ell\cap\bb Z^d$ then
$I_\mathrm{in}(R_0)= \Ent(R_0|dx)$ so that
$ \mf F(R) = \Ent(R|W )$ where $W$ is the law
of a stationary Brownian motion on $\bb T^d$ with diffusion
coefficient $2$.  Readily, this statement
follows directly from Sanov's theorem and the convergence of the
diffusively rescaled random walk to the Brownian motion.

\begin{proof}
  Let $P(R)\in \ms M_+\big(C\big([0,T];\bb T^d)\big)$ be as defined
  around \eqref{tdgp}. By chain rule
  \begin{equation}\label{chain}
    \Ent\big( R \, |\, P(R)\,\big) =
    \Ent\big( R \, |\, Q(\rho) \,\big) 
    - \int\! dR\, \log \frac {d P (R)}{d  Q(\rho)},
  \end{equation}
  where $R_t=\rho_t dx$, $t\in[0,T]$.
  Recalling \eqref{rfh} and \eqref{rfqrv}, it is therefore enough to
  show
  \begin{equation*}
    \int\! dR\, \log \frac {d P (R)}{d  Q(\rho)}
      =  I_\mathrm{dyn}(\rho,j)
  \end{equation*}
  where $j=j^R$ is the representative of the expected current under $R$.

  By Girsanov formula and few direct computations to write the
  exponential martingale in terms of the Stratonovich integral, 
  \begin{equation}\label{Gir}
    \log \frac {d P (R)}{d  Q(\rho)}=
    \int_0^T\!
    \Big[ E_t(X_t) \circ dX_t
    - 
    \Big( \frac{\phi(\rho_t)}{\rho_t} \nabla\cdot E_t
    +      \frac{\phi(\rho_t)}{\rho_t} |E_t|^2 \Big)(X_t) \, dt
    \Big].
  \end{equation}
  By using that $\rho$ is the density of the single time marginals of
  $R$ and $j$ is the representative of the expected current,
  integrating by parts we deduce
  \begin{equation*}
     \int\! dR\, \log \frac {d P (R)}{d  Q(\rho)}
     = \int_0^T\!dt\!\int\!dx\, \Big[
     j_t\cdot E_t + \nabla \phi(\rho_t) \cdot E_t
     - \phi(\rho_t) |E_t|^2 \Big].
   \end{equation*}
  Recalling that $E=E^R$ has to be chosen so that $j= - \phi'(\rho)\nabla\rho
  +2 \phi(\rho) E$ we conclude the proof.
\end{proof}
We notice that, since the Radon-Nikodym derivative \eqref{Gir} is the
sum of two terms one depending on the empirical current and the other
by just the marginals of $R$, we have that $\int \!dR\,\log \frac {d P
  (R)}{d Q(\rho)}=\int\! dP(R)\,\log \frac {d P (R)}{d Q(\rho)}$ so
that the chain identity \eqref{chain} becomes the additivity property
\begin{equation}\label{chain2}
\Ent\big( R \, |\, P(R)\,\big) + \Ent\big( P(R) \, |\, Q(\rho)\,\big)=
\Ent\big( R \, |\, Q(\rho) \,\big) 
\end{equation}

\subsection*{The QRV functional for mean field interacting
  Brownians} 
 
In order to provide further support to the universality of the QRV
functional, we here consider the case of mean field interacting
Brownians.
Although this model is quite far from the context of hydrodynamical
scaling limits, we define a suitable cost functional by adapting the
recipe in \cite{QRV}. We then show that this functional is in fact
equal to the rate function for the empirical measure derived in
\cite{CL}.

We consider a family of $N$ diffusions on $\bb T^d$ evolving
according the stochastic equations
\begin{equation*}
  dX^i_t = -\frac 1N \sum_{j\neq i} \nabla U (X^j_t-X^i_t) dt + \sqrt{2}
  \, dW^i_t, \qquad i=1,\ldots, N
\end{equation*}
where $W^i$, $i=1,\ldots,N$ are independent standard Brownians on
$\bb T^d$ and $U\colon \bb T^d \to \bb R$ is a smooth pair potential.
Clearly, this Markovian dynamics is reversible with respect to the
mean field canonical Gibbs measure with $N$ particles and pair
potential $U$. 
In this context, the empirical density, current, and measure are
defined as in \eqref{ed}, \eqref{ec1}--\eqref{ec2}, \eqref{em},
with the prefactor $\ell^{d}$ replaced by $N$ and dropping the
diffusive rescaling.

The large deviations principle for the empirical density is a classical
result by Dawson and G\"artner \cite{DG} while the joint
large deviations for the empirical density and current have been
obtained in \cite{BC,O}. Referring to those articles for the required
conditions on the initial distribution of the particles, 
the corresponding dynamical rate function
$I_\mathrm{dyn}\colon C\big([0,T];\ms M_1(\bb T^d))\times \mf J\to
[0,+\infty]$ is given by
\begin{equation}
  \label{ldmf}
    I_\mathrm{dyn}(\rho,j) =\frac 14 \int_0^T\!dt\int\!dx\,
  \frac { | j+ \nabla\rho +\rho\, \rho*\nabla U |^2}{\rho}
\end{equation}
in which $\rho$ and $j$ are the representatives of $\pi$ and $\ms J$
and $*$ denotes space convolution. 
The interpretation of \eqref{ldmf} in terms of an applied field is as
in the hydrodynamical case in which the perturbed law of large numbers
reads
\begin{equation}
  \label{phemf}
  \begin{cases}
    \partial_t \rho +\nabla\cdot j =0, \\
    j= -\nabla\rho +\rho( 2E - \rho*\nabla U) .
  \end{cases}
\end{equation}

In order to introduce the QRV functional in this context, given 
the probability
$R\in \ms M_1\big(C\big([0,T];\bb T^d)\big)$ we still denote by
$(\rho_t)_{t\in[0,T]}$ the density of its single time marginals and
by $j=j^R$ the representative of the expected current under $R$. 
We then denote by $P(R)\in \ms M_1\big(C\big([0,T];\bb T^d)\big)$ the law of the
time-inhomogeneous diffusion with time dependent generator
\begin{equation*}
    L^R_t = \Delta + (2E^R_t-\rho_t*\nabla U)\cdot \nabla 
\end{equation*}
and initial condition $\big(P(R)\big)_0 =R_0$. Here the external field $E^R$ is
determined from $(\rho,j)$ by \eqref{phemf}.
The  QRV functional $\mf F$ for mean field interacting Brownians is
then defined as in \eqref{rfqrv} where $I$ is given by \eqref{rfh} with
$I_\mathrm{dyn}$ now given by \eqref{ldmf}.

Analogously to the zero-range case, given the family of probability
densities  $\rho=(\rho_t)_{t\in[0,T]}$, we denote by
$Q(\rho)\in \ms M_1\big(C\big([0,T];\bb T^d)\big)$ the
law of the time-inhomogeneous diffusion on $\bb T^d$ with time
dependent generator 
\begin{equation*}
  L^\rho_t =  \Delta -\rho_t*\nabla U \cdot \nabla
\end{equation*}
and initial condition $\rho_0 dx$.
The proof of the following statement is achieved by the same
computations as in Lemma~\ref{t:qrvzr} and it is therefore omitted. 

\begin{lemma}
  \label{t:qrvib}
  Let $\mf F\colon \ms M_1\big( C([0,T];\bb T^d)\big)\to [0,+\infty]$
  be the QRV functional for mean field interacting Brownians.  
  Then $\mf F(R)$ is infinite unless $(R_t)_{t\in [0,T]}=(\rho_t \,
  dx)_{t\in [0,T]}$, and in this case  
  \begin{equation}
    \label{imf}
    \mf F(R) = I_\mathrm{in}(\rho_0)
    + \Ent\big(R \,\big|\, Q(\rho) \,\big).
  \end{equation}
\end{lemma}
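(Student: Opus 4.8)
The plan is to mirror the proof of Lemma~\ref{t:qrvzr} essentially verbatim, since the structural ingredients are identical: we have a Markovian reference measure $Q(\rho)$ depending only on the marginals, and a Markovian measure $P(R)$ with the same marginals and expected current as $R$, differing from $Q(\rho)$ only through the external field $E^R$ entering the drift. First I would invoke the chain rule for relative entropy to write
\begin{equation*}
  \Ent\big( R \,|\, P(R)\,\big) = \Ent\big( R \,|\, Q(\rho)\,\big)
  - \int\! dR\, \log \frac{dP(R)}{dQ(\rho)},
\end{equation*}
so that, by the definition \eqref{rfqrv} of $\mf F$ together with \eqref{rfh}, it suffices to show $\int\! dR\,\log\frac{dP(R)}{dQ(\rho)} = I_\mathrm{dyn}(\rho,j)$ with $I_\mathrm{dyn}$ now given by \eqref{ldmf} and $j=j^R$.

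The second step is a Girsanov computation. The generators $L^R_t$ and $L^\rho_t$ differ only by the drift term $2E^R_t\cdot\nabla$, both diffusions having the constant diffusion matrix (a multiple of) the identity. Hence by the Cameron–Martin–Girsanov formula, after rewriting the It\^o exponential martingale in Stratonovich form exactly as in \eqref{Gir},
\begin{equation*}
  \log \frac{dP(R)}{dQ(\rho)} = \int_0^T\!\Big[ E^R_t(X_t)\circ dX_t
  - \big( \nabla\cdot E^R_t + |E^R_t|^2\big)(X_t)\, dt\Big],
\end{equation*}
the only change from the zero-range case being that the prefactor $\phi(\rho_t)/\rho_t$ is replaced by $1$ (the constant mobility of Brownian particles). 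Taking expectation with respect to $R$, using that $\rho$ is the density of the single-time marginals of $R$ and that $j=j^R$ is the representative of the expected current under $R$, and integrating by parts in the spatial variable, one obtains
\begin{equation*}
  \int\! dR\,\log\frac{dP(R)}{dQ(\rho)}
  = \int_0^T\!dt\!\int\!dx\,\big[ j_t\cdot E^R_t + \nabla\rho_t\cdot E^R_t - \rho_t|E^R_t|^2\big].
\end{equation*}
Here one must be a little careful with the Stratonovich-to-expectation passage: the Stratonovich integral $\int E^R_t(X_t)\circ dX_t$ is handled as in \cite{FGGT}, its $R$-expectation being $\int_0^T\!dt\!\int\!dx\, j_t\cdot E^R_t$ plus the It\^o-correction term $\int_0^T\!dt\!\int\!dx\,\nabla\cdot E^R_t\,\rho_t$, which after integration by parts yields $-\int_0^T\!dt\!\int\!dx\,\nabla\rho_t\cdot E^R_t$; combined with the remaining $-(\nabla\cdot E^R_t+|E^R_t|^2)$ term this gives the displayed formula. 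Finally, substituting the constraint $j = -\nabla\rho - \rho\,\rho*\nabla U + 2\rho E^R$ coming from \eqref{phemf} — i.e.\ $2E^R = \rho^{-1}(j+\nabla\rho+\rho\,\rho*\nabla U)$ — into $I_\mathrm{dyn}(\rho,j)=\frac14\int\!\int |j+\nabla\rho+\rho\,\rho*\nabla U|^2/\rho = \int\!\int \rho|E^R|^2$ and comparing with the expression above identifies the integral with $I_\mathrm{dyn}(\rho,j)$, completing the proof.

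There is no serious obstacle; the statement is flagged in the paper as provable "by the same computations" as Lemma~\ref{t:qrvzr}, and indeed the mean-field case is formally simpler because the mobility is the constant $\rho\mapsto\rho$ (as for independent particles) rather than the zero-range $\phi(\rho)$. The only points demanding minor care are the bookkeeping of the It\^o correction in passing from the Stratonovich integral to the $R$-expectation, and checking that the convolution term $\rho_t*\nabla U$, which is common to both generators $L^R_t$ and $L^\rho_t$, drops out of the Radon–Nikodym derivative so that it never enters the Girsanov exponent — both of which are routine. As in the zero-range case, one also notes a posteriori that since $\log\frac{dP(R)}{dQ(\rho)}$ splits into a current-dependent piece and a marginal-dependent piece, $\int\! dR\,\log\frac{dP(R)}{dQ(\rho)} = \int\! dP(R)\,\log\frac{dP(R)}{dQ(\rho)}$, so \eqref{chain} upgrades to the additivity $\Ent(R|P(R)) + \Ent(P(R)|Q(\rho)) = \Ent(R|Q(\rho))$.
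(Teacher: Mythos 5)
Your overall strategy is the right one (the paper itself omits this proof, stating it is obtained by the same computations as Lemma~\ref{t:qrvzr}), but there is a concrete error in your Girsanov step: the claim that the convolution drift, being common to $L^R_t$ and $L^\rho_t$, ``never enters the Girsanov exponent'' is false once the exponent is written in terms of the canonical process. The common drift cancels only in the quadratic term; the stochastic integral $\int c\cdot dW^{Q}$ with $c=\sqrt 2\,E^R$ must be rewritten using $dW^{Q}=\tfrac1{\sqrt2}\big(dX +(\rho_t*\nabla U)(X_t)\,dt\big)$, which reinstates the drift of $Q(\rho)$. The correct exponent is
\begin{equation*}
  \log\frac{dP(R)}{dQ(\rho)}
  =\int_0^T\Big[E^R_t(X_t)\circ dX_t
  +\Big(E^R_t\cdot(\rho_t*\nabla U)-\nabla\cdot E^R_t-|E^R_t|^2\Big)(X_t)\,dt\Big],
\end{equation*}
i.e.\ your displayed formula is missing the cross term $E^R_t\cdot(\rho_t*\nabla U)$. (In the zero-range case no such term appears because $Q(\rho)$ there has zero drift; this is precisely where mimicking that computation verbatim breaks down.)

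The missing term is not harmless: taking the $R$-expectation of your exponent gives $\int_0^T\!dt\!\int\!dx\,[\,j\cdot E^R+\nabla\rho\cdot E^R-\rho|E^R|^2\,]$, and after substituting $j+\nabla\rho+\rho\,\rho*\nabla U=2\rho E^R$ this equals $\int\!\!\int\big[\rho|E^R|^2-\rho\,(\rho*\nabla U)\cdot E^R\big]$, which is \emph{not} $I_\mathrm{dyn}(\rho,j)$ in general; so the final ``comparison'' you assert does not go through with your formula. With the corrected exponent the extra term contributes $\int\!\!\int\rho\,(\rho*\nabla U)\cdot E^R$, and one gets
\begin{equation*}
  \int\!dR\,\log\frac{dP(R)}{dQ(\rho)}
  =\int_0^T\!dt\!\int\!dx\,\Big[\big(j+\nabla\rho+\rho\,\rho*\nabla U\big)\cdot E^R-\rho|E^R|^2\Big]
  =\int_0^T\!dt\!\int\!dx\,\rho\,|E^R|^2=I_\mathrm{dyn}(\rho,j),
\end{equation*}
which is exactly \eqref{ldmf} and completes the argument. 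So the proof is salvageable by fixing this bookkeeping, but as written the Girsanov formula and the concluding identification are wrong.
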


As proven in \cite{CL}, the right hand side of \eqref{imf} is the
large deviations rate function of the empirical measure for mean field
interacting Brownians. The previous lemma thus shows that the QRV
recipe can be applied also in this context.

For short-range interacting Brownians the hydrodynamical limit is
proven in \cite{V}, the converge of a tagged particle, for $d\ge 2$,
is analyzed in \cite{osada1,osada2} where the self-diffusion coefficient is
identified.  As discussed in \cite{Sp}, a peculiarity of this model is
that the mobility is given by $\sigma(\rho)=\rho$, as for
non-interacting particles. It does not however appear possible to
deduce a simpler expression for the QRV functional and one should
resort to the general formulation in \eqref{rfqrv}.
Furthermore, for $d=1$, it is possible to consider the limiting case
in which the interaction range vanishes, obtaining Brownians
interacting by partial reflection.
In this case we simply have $D_\mathrm{h}=1$,
$D_\mathrm{s}=\lambda(\lambda+\rho)^{-1}$, where $\lambda\ge 0$ is the
parameter quantifying the partial reflection, and
$\sigma(\rho)=\rho$.
As already mentioned, the large deviations principle for the empirical
measure is proven in \cite{Seo}, the corresponding rate function is
expressed in the QRV form.

\section{Standard Schr\"odinger problem for interacting particles}
\label{s:spip}

According to its statistical interpretation, the standard
Schr\"odinger problem for interacting particles on the time interval
$[0,T]$ can be formulated as an optimization problem either for the
hydrodynamical functional
$I\colon C\big([0,T];\ms M_+(\bb T^d)\big)\times \mf J \to
[0,+\infty]$ or for the QRV functional
$\mf F\colon \ms M_+\big( C([0,T];\bb T^d)\big)\to [0,+\infty]$ as
respectively introduced in \eqref{rfh} and \eqref{rfqrv}.
To be precise, as large deviation rate functions for stochastic
lattice gases, these functionals are respectively defined on
$D\big([0,T];\ms M_+(\bb T^d)\big)\times \mf J$ and
$\ms M_+\big( D([0,T];\bb T^d)\big)$ but we can restrict to continuous
paths.  These functionals depend on the hydrodynamical transport
coefficients $D_\mathrm{h}$, $\sigma$ and on the self-diffusion
$D_\mathrm{s}$ that -- in principle -- can be derived from the
underlying microscopic dynamics. For the present purposes we instead
consider them to be the ``input data'' for the optimization problems
we are going to introduce.  We assume these coefficients are smooth
and that the Einstein relationship \eqref{er} holds for a strictly
convex free energy $f$.  Recall that for
$R\in \ms M_+\big( C([0,T];\bb T^d)\big)$ the family of its single
time marginals is denoted by $(R_t)_{t\in[0,T]}$.

\begin{definition}
  \label{t:ssp}
  Fix $T>0$ and $\mu_0,\mu_1\in \ms M_+(\bb T^d)$ which we assume to
  be absolutely continuous with respect to $dx$ and with the same
  mass.
  \begin{itemize}
  \item [(HSP)] The \emph{hydrodynamical Schr\"odinger problem}
    is the optimization problem 
    \begin{equation*}
      V_T(\mu_0,\mu_1) :=
      \inf \big\{ I(\pi,\ms J),\; \pi_0=\mu_0,\, \pi_T=\mu_1 \big\}.
    \end{equation*}
   \item [(MSP)] The \emph{measure Schr\"odinger problem}
    is the optimization problem 
    \begin{equation*}
      \ms V_T(\mu_0,\mu_1)
      :=  \inf \big\{ \mf F(R),\; R_0=\mu_0,\, R_T=\mu_1 \big\}.
    \end{equation*}   
  \end{itemize}
\end{definition}

As next detailed, the optimization (MSP) is a generalization to
interacting particle for the classical Schr\"odinger problem for a
Brownian motion on $\bb T^d$.
The optimization (HSP) is exactly the version of the Schr\"odinger
problem introduced in \cite{CC}. As we have fixed the initial datum
$R_0=\pi_0=\mu_0$, the contribution from the initial condition
$I_\mathrm{in}$ in \eqref{rfh} is not relevant for either (HSP) or (MSP)
and can be dropped.   
By the goodness of the functionals $I$ and $\mf F$, if
$V_T(\mu_0,\mu_1)<+\infty$, respectively
$\mc V_T(\mu_0,\mu_1)<+\infty$, then the infimum in (HSP),
respectively in (MSP), is achieved, i.e.\ it is actually a minimum.
In contrast to the case of independent particles, as neither $I$ nor
$\mf F$ is in general convex, uniqueness of the minimizer for (HSP)
and (MSP) may fail.

If we consider the case in which $\mu_0=m \, dx$, $m\in(0,\infty)$, so
that that the corresponding density is a stationary solution to the
hydrodynamical equation \eqref{nde}, and $T=\infty$ then the
optimization (HSP) defines, in the Freidlin-Wentzell terminology
\cite{FW}, the \emph{quasi-potential} associated to the rate function $I$.
In the present setting of reversible dynamics, $V_{\infty}(mdx,\cdot)$
is then given by the free energy functional on the right hand side of
\eqref{Iincan}, which describes the static large deviations of the
empirical density when the particle are sampled according to the Gibbs
measure.  On the other hand, when the underlying microscopic system is
not in equilibrium the stochastic dynamics in no longer reversible and
the computation of the quasi-potential becomes a most relevant issue,
we refer to \cite{BDGJLrev} for a review.  In particular, in the case
of a particular boundary driven one-dimensional model, when the
density of $\mu_0$ is the stationary solution to the hydrodynamical
equation there are examples of $\mu_1$, as constructed in
\cite{BDGJLlpt}, for which the minimizer for $V_{\infty}(\mu_0,\mu_1)$
is not unique. As these values of $\mu_1$ correspond to points in
which the tangent functional to the quasi-potential is not unique,
they can be interpreted as the occurrence of phase transitions.

Let $I^{(1)}\colon C\big([0,T];\ms M_+(\bb T^d)\big)\to [0,+\infty]$
be the projection on the density of the hydrodynamical rate function,
\begin{equation}
  \label{Iden}
  I^{(1)}(\pi) := \inf\big\{  I(\pi,\ms J),\; \ms J \textrm{ such
    that }\partial_t \pi
  +\nabla\cdot \ms J = 0\big\}.
\end{equation}
The optimization (HSP) can then be recast as
\begin{equation}\leqnomode
  \tag{HSP'}
      V_T(\mu_0,\mu_1) =
      \inf \big\{ I^{(1)}(\pi),\; \pi_0=\mu_0,\, \pi_T=\mu_1 \big\}.
\end{equation}
As discussed in \cite{BDGJLrev,CC}, the functional $I^{(1)}$ can be
regarded as an action functional; while the associated Lagrangian is
somewhat cumbersome to write, the corresponding Hamiltonian simply
reads
\begin{equation}
  \label{ham}
  \ms H(\rho,H) = \int\! dx\,
  \big[ \sigma(\rho) |\nabla H|^2 - \nabla H
  \cdot D_\mathrm{h}(\rho)\nabla \rho \big] 
\end{equation}
where $H\colon \bb T^d\to \bb R$ is the momentum conjugated to the
density $\rho$.  
In view of this Hamiltonian structure, if
$(\pi^*_t)_{t\in[0,T]} = (\rho^*_t dx)_{t\in[0,T]}$ is a minimizer for
(HSP') then $\rho^*= (\rho^*_t)_{t\in[0,T]}$ solves the canonical
equations associated to the Hamiltonian $\ms H$,
\begin{equation}
  \label{caneq}
  \begin{cases}
    \partial_t\rho = \nabla\cdot \big(D_\mathrm{h}(\rho)\nabla\rho\big)
    - 2\, \nabla\cdot \big(\sigma(\rho) \nabla H \big),\\
    \partial_t H = - D_\mathrm{h}(\rho)\Delta H
    - \sigma'(\rho) |\nabla H|^2,
  \end{cases}
\end{equation}
for a suitable conjugated momentum $H^*= (H^*_t)_{t\in[0,T]}$.
We refer to \cite{CC} for the actual proof of this statement.

\subsection*{Independent particles}
Before analyzing the problems (HSP) and (MSP) in the context of
interacting particles, we next discuss the case of independent
particles. In particular, referring to \cite{Lrev} for the proofs of
the following claims and for a more detailed exposition, we review the
standard Schr\"odinger problem for a Brownian on $\bb T^d$.  This
special situation corresponds to the case of a microscopic dynamics
given either by independent Brownians or a stochastic lattice gas with
non-interacting particles.

The cost functionals are specified by the coefficients
$D_\mathrm{h}=D_\mathrm{s}=1$ and $\sigma(\rho)=\rho$.  Accordingly,
the dynamical contribution to the hydrodynamical rate function is
\begin{equation}\label{phe-I-ind}
  I_\mathrm{dyn}(\rho,j) = \int_0^T\!dt\int\!dx\,
  \rho |E|^2.
\end{equation}
where $E=E_t(x)$ is computed from $(\rho,j)$ by \eqref{phe} that for
independent particles reads $j=-\nabla\rho +2\rho E$.
The QRV rate functional $\mf F$ is then given by \eqref{rfqrv} in
which the time dependent generator \eqref{tdgp} becomes simply
$L^R_t=\Delta+2 E^R_t\cdot \nabla$ with $E^R$ as before with $j=j^R$ the
representative of the expected current under $R$. 

In view of Lemma~\ref{t:qrvzr}, the functional $\mf F$ admits the
alternative representation 
\begin{equation*}\label{2form-zr-ind1}
  \mf F(R) = I_\mathrm{in}(\rho_0)
  + \Ent\big(R \,\big|\, W(\rho_0) \,\big)\,,
\end{equation*}
where $W(\rho_0)$ is the law of a Brownian motion on $\bb T^d$ with
diffusion coefficient $2$ and initial distribution $\rho_0\, dx$.
Indeed, for independent particles the generator in \eqref{tdgzr} does not
depend on $\rho$ and it simply $L^\rho=\Delta$; the dependence on
$\rho$ is therefore just on the initial condition $\rho_0$. Formula \eqref{2form-zr-ind} is \eqref{2form-zr}
with $Q(\rho)=W(\rho_0)$.
If we further consider the case in which the particles at time zero
are distributed as i.i.d.\ random variables with a common law
$\rho_\mathrm{in}\,dx$, then 
$I_\mathrm{in}(\rho_0)=\Ent(\rho_0|\rho_\mathrm{in}\,dx)$ so that
\begin{equation}\label{2form-zr-ind}
  \mf F(R) = \Ent\big(R \,\big|\, W(\rho_\mathrm{in}) \,\big)\,.
\end{equation}
In the special case in which $\rho_\mathrm{in}=1$, particles are
initially uniformly distributed on $\bb T^d$ and we denote the
reference measure by $W$, the law of a stationary Brownian with
diffusion $2$ on $\bb T^d$.
Then \eqref{2form-zr-ind} corresponds to the original Schr\"{o}dinger
problem \cite{Lrev}. In this case, as the reference measure
$W$ is reversible, it has some remarkable features that we shortly
recall.

For $R\in \ms M_1\big( C([0,T];\bb T^d)\big)$
we denote by $R_{0T}\in \ms M_1(\mathbb T^d\times \mathbb T^d)$ the
joint law of its marginals at the times $0$ and $T$.
In particular, $W_{0T}=p_T(x,y)\,dxdy$ where $p_T(x,y)$ is the heat
kernel on $\bb T^d$, denotes the joint law of the marginals at time
$0$ and $T$ of a stationary Brownian.
For $\wp \in \ms M_1(\bb T^d\times \bb T^d)$, we denote by
$\wp_0$ and $\wp_1$ its marginals.

Fix $\mu_0,\mu_1\in \ms M_1(\bb T^d)$ which we assume to be absolutely
continuous with respect to $dx$. We introduce the \emph{static
Schr\"{o}dinger problem} (SSP) as the optimization 
\begin{equation}\leqnomode
  \tag{SSP}      
  \mc V_T^\mathrm{s}(\mu_0,\mu_1) :=
  \inf \big\{ \Ent(\wp |W_{0T}),\; \wp_0=\mu_0,\,
  \wp_1=\mu_1 \big\}.
\end{equation}

In view of the strict convexity of $\Ent(\cdot|W_{0T})$, (SSP) admits
a unique minimizer that is furthermore given by 
\begin{equation*}
  \wp^* =p_T(x,y)e^{f(x)+g(y)}\, dxdy  
\end{equation*}
where the functions $f,g\colon \bb T^d\to \bb R$ are determined by
$\wp^*_0=\mu_0$, $\wp^*_1=\mu_1$.  
Moreover, the value function for (HSP), (MSP), and (SSP)
coincide, i.e.\
\begin{equation*}
  \mc V_T^\mathrm{s}(\mu_0,\mu_1)
   =\ms V_T(\mu_0,\mu_1) =V_T(\mu_0,\mu_1). 
\end{equation*}

The unique minimizer for (MSP) can be obtained from $\wp^*$, the
unique minimizer for (SSP). More precisely, letting 
$W_{[0,T]}^{x,y}$ be the law of the Brownian bridge pinned at $x$ at
time zero and at $y$ at time $T$, then the minimizer for (MSP) is
\begin{equation}\label{Rstar}
  R^* = \int\! \wp^*(dxdy)\, W^{x,y}_{[0,T]}
\end{equation}
in which we observe that $R^*_{0T} =\wp^*$ and that $R^*$ is
Markovian. In particular it is a Brownian bridge with laws $\mu_0$ and $\mu_1$ respectively
at time $0$ and $T$ and it is a diffusion process related to the minimizer of the (HSP) as now we detail.

In order to describe the the minimizer for (HSP), we first note that
for independent particles the canonical equations \eqref{caneq} read
\begin{equation}
  \label{caneq-ind}
  \begin{cases}
    \partial_t\rho = \Delta \rho
    - 2\, \nabla\cdot \big(\rho \nabla H \big),\\
    \partial_t H = - \Delta H - |\nabla H|^2.
  \end{cases}
\end{equation}
The minimizer for (HSP) is then given by $(\rho^*,-\nabla \rho^*
+2\rho^*\nabla H^*)$ where $(\rho^*,H^*)$ is the solution to
\eqref{caneq-ind} meeting the conditions 
$\rho_0\,dx =\mu_0$, $\rho_T\,dx=\mu_1$.

The minimizer for the (MSP) is related to to minimizer for the (HSP)
by the fact that $R^*$ in \eqref{Rstar} is the law of the diffusion
process with generator $\Delta +2E^*\cdot \nabla$ and initial
condition $\mu_0$.

The differential problem \eqref{caneq-ind} can be solved by a
symplectic transformation, the classic Cole-Hopf transformation, that transforms the equations into two
decoupled heat equations \cite{KMS}. The symplectic transformation
from the variables $(\rho, H)$ into the new variables $(\xi,\eta)$ is
given by
\begin{equation}\label{C-H}
  \begin{cases}
    \xi=\rho \, e^{-H},\\
    \eta=e^H.
  \end{cases}
\end{equation}
Recalling \eqref{ham}, the new Hamiltonian reads
\begin{equation*}
  \ms K(\xi,\eta)=\ms H(\rho(\xi,\eta),H(\xi,\eta))=-\int\!dx\, \nabla \xi\cdot \nabla \eta
\end{equation*}
whose corresponding canonical equations are decoupled
\begin{equation*}
  \begin{cases}
    \partial_t \xi = \Delta \xi,\\
    \partial_t \eta = - \Delta \eta.
  \end{cases}
\end{equation*}
Finally, the boundary conditions become 
$\xi_0\eta_0\, dx=\mu_0$, $\xi_T\eta_T\, dx=\mu_1$.

The coincidence $\ms V_T(\mu_0,\mu_1)= V_T(\mu_0,\mu_1)$ of the minimal values for the variational problems (HSP) and (MSP), gives the formula \cite[Prop.~6]{Lrev} that is an analogous of the Benamou-Brenier formula for the Schr\"odinger problem. We first observe
that, as follows from \eqref{2form-zr-ind},
\begin{equation}\label{ms-ind}
  \ms V_T(\mu_0,\mu_1) =
  \inf \big\{ \Ent(R|W),\; R_0=\mu_0,\, R_T=\mu_T \big\}.
\end{equation}
Using formula \eqref{phe-I-ind} we have
\begin{equation}
  \label{bbfor}
  V_T(\mu_0,\mu_1)=\inf \int_0^T\!dt\int\!dx\,
  \rho(x,t) |E(x,t)|^2,
\end{equation}
where the infimum is over all $(\rho, E)$ satisfying
$\partial_t\rho=\Delta \rho-2\nabla\cdot(\rho E)$ and meeting the
boundary conditions $\rho_0\, dx =\mu_0$, $\rho_T\, dx =\mu_1$. We
have that the right hand side of \eqref{ms-ind} is equal to the right
hand side of \eqref{bbfor} and this is the formula derived in
\cite[Prop.~6]{Lrev}.

\subsection*{Interacting particles}

As we next show, minimizers for (MSP) can be completely characterized
in terms of minimizers for (HSP') also for interacting particles. This
is due to the specific form of the rate functional and also to the
fact that we are considering constraints just on hydrodynamic
observables.  In contrast to the case of independent particles, it
does not however appear possible to reduce (MSP) to a static problem
analogous to (SSP).

\begin{theorem}
  \label{t:dspvsmsp}
  For each $T>0$ and $\mu_0,\mu_1$ we have
  $V_T(\mu_0,\mu_1)=\ms V_T(\mu_0,\mu_1)$.\hfill\break
  Let also $\pi^*$ be a minimizer for \emph{(HSP')} and denote by
  $\rho^*= (\rho^*_t)_{t\in[0,T]}$ the corresponding density with
  conjugated momentum $H^*= (H^*_t)_{t\in[0,T]}$ that we recall solve the system of equations \eqref{caneq}.
  Then a minimizer for \emph{(HSP)} is $(\pi^*,\ms J^*)$ where $\ms J^*$ is
  represented by $-D_\mathrm{h}(\rho^*)\nabla\rho^* +
  2\sigma(\rho^*)\nabla H^* $.
  Furthermore, a minimizer to \emph{(MSP)} is provided by $R^*$, the law of
  the time-inhomogeneous Markov diffusion with time dependent
  generator
  \begin{equation}
    \label{tdgpop}
    L^*_t = \nabla\cdot D_\mathrm{s} (\rho^*_t) \nabla 
    +
    \Big\{
    \big[ D_\mathrm{s} (\rho^*_t) -  D_\mathrm{h} (\rho^*_t)\big]
    \frac {\nabla \rho^*_t}{\rho^*_t} 
    + 2 \frac {\sigma(\rho^*_t)}{\rho^*_t} \nabla H^*_t \Big\}\cdot \nabla
  \end{equation}
  and initial condition $R^*_0 =\mu_0$.\hfill\break
  Conversely, if $R^*$ is a minimizer to \emph{(MSP)} then $R^*$ is Markovian,
  $R^*=P(R^*)$ and the family of its 
  single time marginals $(R^*_t)_{t\in[0,T]}$ is a minimizer to \emph{(HSP')}.
  Furthermore, letting $R^*_t=\rho^*_t \,dx$, $t\in[0,T]$, the
  external field $E^*=E^{R^*}$ associated to $R^*$ has the form
  $E^* =\nabla H^*$ and the pair $(\rho^*,H^*)$ solves weakly the
  canonical equations \eqref{caneq}. Finally  $(\pi^*,\ms J^*)$, where $\pi^*$ has density $\rho^*$ and $\ms J^*$ is
  represented by $-D_\mathrm{h}(\rho^*)\nabla\rho^* +
  2\sigma(\rho^*)\nabla H^* $, is a minimizer for \emph{(HSP)}.
\end{theorem}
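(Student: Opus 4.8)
The plan is to exploit the key structural fact, established in the discussion around \eqref{rfqrv}, that the second term $\Ent(R\,|\,P(R))$ in $\mf F(R)$ is nonnegative and vanishes precisely when $R=P(R)$ is Markovian, while the first term $I((R_t)_{t},\ms J^R)$ depends on $R$ only through its single time marginals and expected current. First I would prove the inequality $\ms V_T(\mu_0,\mu_1)\le V_T(\mu_0,\mu_1)$ by constructing an explicit competitor: given a minimizer $\pi^*$ of (HSP') with density $\rho^*$ and conjugated momentum $H^*$ solving \eqref{caneq}, set $E^*=\nabla H^*$, so that $j^*=-D_\mathrm h(\rho^*)\nabla\rho^*+2\sigma(\rho^*)\nabla H^*$ satisfies the perturbed hydrodynamical equation \eqref{phe} with field $E^*$; then let $R^*$ be the law of the time-inhomogeneous diffusion with generator \eqref{tdgpop} and initial condition $\mu_0$. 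By the characterization of $P(R)$ via the three conditions after \eqref{tdgp}, $R^*$ is Markovian with $P(R^*)=R^*$, its single time marginals solve \eqref{phe} hence have densities $\rho^*_t$ and satisfy $R^*_T=\rho^*_T\,dx=\mu_1$, and its expected current is represented by $j^*$. Consequently $\mf F(R^*)=I((R^*_t)_t,\ms J^{R^*})=I_\mathrm{dyn}(\rho^*,j^*)=I^{(1)}(\pi^*)=V_T(\mu_0,\mu_1)$, where the third equality uses that $\nabla H^*$ is exactly the minimizing field in \eqref{Iden}, i.e.\ the Euler--Lagrange optimality of the Hamiltonian structure \eqref{ham}.

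For the reverse inequality $V_T(\mu_0,\mu_1)\le\ms V_T(\mu_0,\mu_1)$, take any admissible $R$ with $R_0=\mu_0$, $R_T=\mu_1$ and $\mf F(R)<\infty$. Then $(R_t)_{t\in[0,T]}$ is a path of measures with densities $(\rho_t)$ satisfying $\rho_0\,dx=\mu_0$, $\rho_T\,dx=\mu_1$, and by nonnegativity of the relative entropy term,
\begin{equation*}
  \mf F(R)\ \ge\ I\big((R_t)_{t},\ms J^R\big)\ \ge\ I^{(1)}\big((R_t)_t\big)\ \ge\ V_T(\mu_0,\mu_1),
\end{equation*}
where the middle inequality is the definition \eqref{Iden} of the projected functional (the expected current $\ms J^R$ satisfies the continuity equation, so it is a competitor) and the last is the definition (HSP'). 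Taking the infimum over $R$ gives the claim, and combined with the first half yields $V_T=\ms V_T$ and that the constructed $R^*$ is a minimizer for (MSP); that $(\pi^*,\ms J^*)$ is a minimizer for (HSP) follows since $\ms J^*$ is the current realizing $I^{(1)}(\pi^*)$ and $I_\mathrm{in}$ can be dropped once $\pi_0=\mu_0$ is imposed.

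Finally, for the converse direction, suppose $R^*$ is any minimizer for (MSP); I would argue as follows. The chain of inequalities above is saturated, which forces $I((R^*_t)_t,\ms J^{R^*})=V_T(\mu_0,\mu_1)$ and $\Ent(R^*\,|\,P(R^*))=0$; the latter gives $R^*=P(R^*)$, so $R^*$ is Markovian and is the diffusion with generator \eqref{tdgp} built from its own marginals $(\rho^*_t)$ and field $E^{R^*}$. Saturation of the first inequality shows $(R^*_t)_t$ is a minimizer for (HSP'), and by the Hamiltonian structure recalled before \eqref{caneq}, any such minimizer has its density solving the canonical equations \eqref{caneq} for a suitable conjugate momentum $H^*$; moreover the optimal current for $I^{(1)}$ is $-D_\mathrm h(\rho^*)\nabla\rho^*+2\sigma(\rho^*)\nabla H^*$, so comparing with $j^{R^*}=-D_\mathrm h(\rho^*)\nabla\rho^*+2\sigma(\rho^*)E^{R^*}$ and using strict positivity of $\sigma$ forces $E^{R^*}=\nabla H^*$, putting the generator of $R^*$ into the form \eqref{tdgpop} and identifying $(\rho^*,H^*)$ as a weak solution of \eqref{caneq}; then $(\pi^*,\ms J^*)$ with $\ms J^*$ represented by that current is a minimizer for (HSP). \emph{The main obstacle} I anticipate is the rigorous passage from ``$(\rho^*_t)_t$ minimizes $I^{(1)}$'' to ``$(\rho^*,H^*)$ solves the canonical equations \eqref{caneq} with $E^{R^*}=\nabla H^*$'': this is the gradient-field structure of the optimal external field, which requires the Euler--Lagrange analysis of the action functional $I^{(1)}$ with Hamiltonian \eqref{ham} — precisely the step that \cite{CC} carries out and which we invoke rather than reprove, the only subtlety being regularity of minimizers sufficient to justify the integration by parts and the identification of the conjugate momentum.
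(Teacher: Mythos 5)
Your overall architecture coincides with the paper's: split $\mf F = I + \Ent(\cdot\,|\,P(\cdot))$, note that the entropy term vanishes exactly when $R=P(R)$ is Markovian, quote \cite{CC} for the fact that a minimizer of (HSP') solves the canonical equations \eqref{caneq} with a conjugate momentum $H^*$, and reduce everything to identifying the optimal hydrodynamic current. The one place where your argument is not complete is precisely the step that constitutes essentially the whole of the paper's proof: the claim that, for the minimizing density $\rho^*$ with conjugate momentum $H^*$, the current realizing the infimum in \eqref{Iden} is $\hat\jmath := -D_\mathrm{h}(\rho^*)\nabla\rho^* + 2\sigma(\rho^*)\nabla H^*$. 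You use this twice — to get $I_\mathrm{dyn}(\rho^*,\hat\jmath)=I^{(1)}(\pi^*)$ in the direction $\ms V_T\le V_T$, and to force $j^{R^*}=\hat\jmath$, hence $E^{R^*}=\nabla H^*$, in the converse — but you only assert it as ``Euler--Lagrange optimality'' and defer it to \cite{CC}. What the paper imports from \cite{CC} is only that $(\rho^*,H^*)$ solves \eqref{caneq}; the gradient structure of the optimal current is proved in the paper by a short self-contained argument, and leaving it out leaves a genuine (if small and correctly diagnosed) gap at the two points above.

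The missing argument is brief. Let $j$ be any current compatible with $\rho^*$, i.e.\ admissible in \eqref{Iden}, and write $j=\hat\jmath+\tilde\jmath$. Since both $j$ and $\hat\jmath$ satisfy the continuity equation with the same $\rho^*$ (the latter because of the first equation in \eqref{caneq}), we get $\nabla\cdot\tilde\jmath=0$. As $\hat\jmath+D_\mathrm{h}(\rho^*)\nabla\rho^*=2\sigma(\rho^*)\nabla H^*$, expanding the square in \eqref{rfhdyn} the cross term equals $\int_0^T\!dt\!\int\!dx\,\nabla H^*_t\cdot\tilde\jmath_t$, which vanishes after integration by parts, so that $I_\mathrm{dyn}(\rho^*,j)=I_\mathrm{dyn}(\rho^*,\hat\jmath)+\tfrac14\int_0^T\!dt\!\int\!dx\,|\tilde\jmath|^2/\sigma(\rho^*)$. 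This shows at once that $\hat\jmath$ attains $I^{(1)}(\pi^*)$ (closing your first direction) and that the minimizing current for a fixed density path is unique (which is what your converse needs, together with $\sigma>0$, to conclude $E^{R^*}=\nabla H^*$). With this orthogonality computation inserted, your proof matches the paper's.
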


In terms of the underlying interacting particle system, the minimizer
$R^*$ can be obtained as law of large numbers for the empirical
measure by considering a weakly asymmetric perturbation with time
dependent external field $2 \nabla H^*$.

Theorem~\ref{t:dspvsmsp} together with the characterization of the QRV
functional for the zero range process in Lemma~\ref{t:qrvzr} implies
the following generalization of \cite[Prop.~6]{Lrev} as recalled in
\eqref{bbfor}.
Fix $\phi\colon \bb R_+\to\bb R_+$ smooth, strictly increasing, and
satisfying $\phi(0)=0$. Given $m\in (0,+\infty)$ and $R\in \ms
M_m\big(C([0,T];\bb T^d)\big)$, assume $R_t=\rho_t\,dx$ and denote by
$Q(\rho)\in \ms M_m\big(C([0,T];\bb T^d)\big)$ the law of the
time-inhomogeneous diffusion on $\bb T^d$ with time dependent
generator \eqref{tdgzr} and initial condition $\rho_0 dx$.

\begin{corollary}
  \label{t:gbb}
  Let $\mu_0,\mu_1\in \ms M_m(\bb T^d)$ be absolutely continuous with
  respect to $dx$. Then
  \begin{equation*}
    \inf\big\{ \Ent\big(R|Q(\rho)\big),\; R_0=\mu_0,\,R_T=\mu_1 \big\}
    = \inf \; \int_0^T\!dt\int\!dx\,\phi(\rho_t) |E_t|^2
  \end{equation*}
  where the infimum on the right hand side is over all $(\rho, E)$
  satisfying $\partial_t\rho=\Delta \phi(\rho) -2\nabla\cdot( \phi(\rho) E)$ and
  meeting the boundary conditions $\rho_0\, dx =\mu_0$,
  $\rho_T\, dx =\mu_1$.
\end{corollary}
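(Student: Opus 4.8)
The plan is to combine the three tools the excerpt has already assembled: Lemma~\ref{t:qrvzr} (which rewrites $\mf F$ for the zero range process as $I_\mathrm{in}(\rho_0)+\Ent(R|Q(\rho))$), Theorem~\ref{t:dspvsmsp} (which identifies the value of (MSP) with that of (HSP) and shows minimizers of (MSP) are the Markovian measures $P(R^*)$ with external field $\nabla H^*$), and the explicit form \eqref{phe-I} of $I_\mathrm{dyn}$ in terms of the work done by the external field. Concretely, I would apply these with the zero range transport coefficients $D_\mathrm{h}(\rho)=\phi'(\rho)$, $\sigma(\rho)=\phi(\rho)$, $D_\mathrm{s}(\rho)=\phi(\rho)/\rho$, and with the initial datum fixed to $\mu_0$ so that $I_\mathrm{in}$ drops out (as noted after Definition~\ref{t:ssp}).

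The first step is to observe that since $\mu_0$ is fixed and absolutely continuous, Lemma~\ref{t:qrvzr} gives, for any admissible $R$ with $R_0=\mu_0$, $R_T=\mu_1$, the identity $\mf F(R)=\Ent(R|Q(\rho))$; hence the left-hand side of the corollary equals $\ms V_T(\mu_0,\mu_1)$, the value of (MSP). The second step is to invoke Theorem~\ref{t:dspvsmsp} to replace $\ms V_T(\mu_0,\mu_1)$ by $V_T(\mu_0,\mu_1)$, the value of (HSP), equivalently of (HSP'). The third step is to unfold $V_T(\mu_0,\mu_1)$ using \eqref{phe-I}: for the zero range model the perturbed hydrodynamic equation \eqref{phe} reads $\partial_t\rho+\nabla\cdot j=0$ with $j=-\phi'(\rho)\nabla\rho+2\phi(\rho)E=-\nabla\phi(\rho)+2\phi(\rho)E$, so eliminating $j$ gives exactly the constraint $\partial_t\rho=\Delta\phi(\rho)-2\nabla\cdot(\phi(\rho)E)$, and the cost $I_\mathrm{dyn}(\rho,j)=\int_0^T\!dt\int\!dx\,\sigma(\rho)|E|^2=\int_0^T\!dt\int\!dx\,\phi(\rho_t)|E_t|^2$. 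Combining: $V_T(\mu_0,\mu_1)=\inf\int_0^T\!dt\int\!dx\,\phi(\rho_t)|E_t|^2$ over $(\rho,E)$ satisfying that PDE and the boundary conditions $\rho_0\,dx=\mu_0$, $\rho_T\,dx=\mu_1$, which is precisely the right-hand side of the corollary. Chaining the three equalities finishes the proof.

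The one point that needs a little care — and the place I expect the only genuine friction — is matching the parametrizations in the two variational problems. On the (HSP) side one optimizes over pairs $(\pi,\ms J)$ (equivalently over densities $\rho$ with a current $j$ obeying the continuity equation), whereas the corollary phrases the right-hand side in terms of $(\rho,E)$; the dictionary $j\leftrightarrow E$ via $j=-\nabla\phi(\rho)+2\phi(\rho)E$ is a bijection only where $\phi(\rho)>0$, so one must check that the optimizer charges $\{\rho=0\}$ negligibly (which follows from $\phi(0)=0$, strict monotonicity, and finiteness of the cost) and, conversely, that the $(\rho,E)$ formulation does not create spurious low-cost competitors — again handled by finiteness of $\int\phi(\rho)|E|^2$, since the drift term $\phi(\rho)E$ stays in $L^2$. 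I would also recall, as the excerpt does after Definition~\ref{t:ssp}, that the goodness of $I$ and $\mf F$ guarantees the infima are attained, so the equalities are between genuine minima. With these bookkeeping remarks in place the corollary is immediate; I would present it in essentially the three-line form above, flagging the $\phi(\rho)>0$ change-of-variables only parenthetically.
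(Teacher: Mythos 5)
Your proposal is correct and follows exactly the route the paper intends for Corollary~\ref{t:gbb}: apply Lemma~\ref{t:qrvzr} with the zero-range coefficients $D_\mathrm{h}=\phi'$, $\sigma=\phi$, $D_\mathrm{s}=\phi(\rho)/\rho$ so that the left-hand side is the (MSP) value (with $I_\mathrm{in}$ dropping out since $\mu_0$ is fixed), then use Theorem~\ref{t:dspvsmsp} to pass to (HSP), and finally rewrite $I_\mathrm{dyn}$ via the external field as in \eqref{phe-I}, which yields precisely the constrained infimum of $\int_0^T\!dt\int\!dx\,\phi(\rho_t)|E_t|^2$. The bookkeeping point about the bijection $j\leftrightarrow E$ where $\phi(\rho)>0$ is at the same level of informality the paper itself accepts, so nothing further is needed.
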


In view of Lemma~\ref{t:qrvib}, an analogous statement holds also in
the case of mean field interacting Brownians.

\begin{proof}[Proof of Theorem~\ref{t:dspvsmsp}]
  As follows from definition \eqref{rfqrv}, $\ms
  V_T(\mu_0,\mu_1) \ge  V_T(\mu_0,\mu_1)$ with equality if and only if
  $R$ is equal to the Markovian measure $P(R)$ described around
  \eqref{tdgp}. To complete the proof it is therefore enough to show that if
  $(\pi^*,\ms J^*)$ is a minimizer to (HSP) then
  $j^* = -D_\mathrm{h}(\rho^*)\nabla\rho^* + 2\sigma(\rho^*)\nabla
  H^*$ where $(\rho^*,j^*)$ is the representative of  $(\pi^*,\ms
  J^*)$ and $H^*$ is the momentum conjugated to $\rho^*$.
  To this end, we set
  \begin{equation*}
    \hat \jmath :=
    -D_\mathrm{h}(\rho^*)\nabla\rho^* + 2\sigma(\rho^*)\nabla H^*
  \end{equation*}
  and write $j^*=\hat \jmath+ \tilde \jmath$.
  Since $\partial_t\rho^* +\nabla\cdot j^*=0$ and the pair
  $(\rho^*,H^*)$ solves the first equation in \eqref{caneq}, we deduce
  $\nabla\cdot \tilde \jmath =0$. Hence, integrating by parts,
  \begin{equation*}
    I_\mathrm{dyn}(\rho^*,j^*)
    = I_\mathrm{dyn}(\rho^*,\hat \jmath)
    + \frac 14 \int_0^T\!dt\!\int\!dx \, 
    \frac { | \tilde \jmath |^2}{\sigma(\rho^*)}
  \end{equation*}
  which yields $j^*=\hat \jmath$.
\end{proof}

In Theorem~\ref{t:dspvsmsp} we have characterized the minimizers to
(MSP) as forward Markov measures on $C([0,T];\bb R^d)$. Clearly, it also
possible to characterize them as \emph{backward} Markov measures; this
will provide a probabilistic interpretation for the dual potentials
introduced in \cite{CC}.
  
\begin{lemma}
  \label{t:trR*}
   Let $\theta$ be the involution on $C([0,T];\bb T^d)$ defined by
   $(\theta X)_t=X_{T-t}$.
   Given a minimizer $R^*$ for \emph{(MSP)}, set $\hat R:= R^*\circ \theta$ and
   let $\hat H\colon [0,T]\times \bb T^d\to \bb R$ be such that
   \begin{equation}\label{Born}
     H_t^*+\hat H_{t} = f'(\rho^*_t), \qquad t\in[0,T]
   \end{equation}
   where $\rho^*$ is the density of the single time marginals of $R^*$
   while $H^*$ is the conjugated momentum.
   Then $\hat R$ is the law of the time-inhomogeneous Markov diffusion
   with time dependent generator
  \begin{equation}
    \label{tdgpopr}
    \hat L_t = \nabla\cdot D_\mathrm{s} (\rho^*_{T-t}) \nabla 
    +
    \Big\{
    \big[ D_\mathrm{s} (\rho^*_{T-t}) -  D_\mathrm{h} (\rho^*_{T-t})\big]
    \frac {\nabla \rho^*_{T-t}}{\rho^*_{T-t}} 
    + 2 \frac {\sigma(\rho^*_{T-t})}{\rho^*_{T-t}}
    \nabla {\hat H}_{T-t} \Big\}\cdot \nabla
  \end{equation}
  and initial condition $\mu_1$. 
\end{lemma}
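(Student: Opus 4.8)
The statement is about time reversal of the optimal diffusion $R^*$ for (MSP). By Theorem~\ref{t:dspvsmsp} we already know that $R^*$ is the law of the forward Markov diffusion with generator \eqref{tdgpop}, driven by the external field $E^*=\nabla H^*$, and that $(\rho^*,H^*)$ solves the canonical equations \eqref{caneq}. The plan is to compute directly the time reversal of this diffusion using the standard formula for the generator of the reversed process, and then to identify the reversed drift with the expression \eqref{tdgpopr} by means of the substitution \eqref{Born}.

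\medskip
\noindent\textbf{Step 1: reversal formula for a time-inhomogeneous diffusion.}
First I recall the classical fact: if $(P_t)_{t\in[0,T]}$ is the law of a time-inhomogeneous diffusion on $\bb T^d$ with generator $L_t f = \nabla\cdot a_t\nabla f + c_t\cdot\nabla f$ and with single-time marginal densities $\alpha_t$ (with respect to $dx$), then the reversed process, i.e.\ the law of $t\mapsto X_{T-t}$, is again a diffusion, with the same diffusion matrix $a_{T-t}$ and with drift
\begin{equation*}
  \hat c_t = - c_{T-t} + \frac{2}{\alpha_{T-t}}\,\nabla\cdot\big(a_{T-t}\,\alpha_{T-t}\big).
\end{equation*}
(Equivalently: the reversed expected current is minus the forward one, which is the content that will be used to verify Born's rule.) I would apply this with $\alpha_t=\rho^*_t$, $a_t=D_\mathrm s(\rho^*_t)$, and $c_t$ the bracketed drift in \eqref{tdgpop}.

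\medskip
\noindent\textbf{Step 2: rewriting the reversed drift.}
The point is now purely algebraic: with $a_t=D_\mathrm s(\rho^*_t)$ one has $\nabla\cdot(D_\mathrm s(\rho^*_t)\rho^*_t) = \rho^*_t\,D_\mathrm s(\rho^*_t)\frac{\nabla\rho^*_t}{\rho^*_t} + \rho^*_t\,D'_\mathrm s(\rho^*_t)\nabla\rho^*_t$, but since $\nabla\cdot D_\mathrm s(\rho^*_t)\nabla$ is the divergence-form operator we only need $\nabla\cdot\big(D_\mathrm s(\rho^*_t)\rho^*_t\big)/\rho^*_t$; combining with $-c_{T-t}$ the terms involving $D_\mathrm s$ and $D_\mathrm h$ rearrange so that the reversed drift becomes
\begin{equation*}
  \big[D_\mathrm s(\rho^*_{T-t})-D_\mathrm h(\rho^*_{T-t})\big]\frac{\nabla\rho^*_{T-t}}{\rho^*_{T-t}}
  + \frac{2\sigma(\rho^*_{T-t})}{\rho^*_{T-t}}\Big(\,\text{something}\,\Big),
\end{equation*}
where the ``something'' is precisely $\nabla\hat H_{T-t}$ once one uses the Einstein relation $D_\mathrm h=f''\sigma$ to turn the leftover $D_\mathrm h(\rho^*)\nabla\rho^*/\sigma(\rho^*)$ term into $\nabla f'(\rho^*)$, i.e.\ into $\nabla H^*_{T-t}+\nabla\hat H_{T-t}$ via \eqref{Born}. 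One of the two copies of $\nabla H^*$ cancels the forward-field contribution $-2\sigma(\rho^*)\nabla H^*/\rho^*$ coming from $-c_{T-t}$, leaving exactly $2\sigma(\rho^*_{T-t})\nabla\hat H_{T-t}/\rho^*_{T-t}$. This reproduces \eqref{tdgpopr}. I would also note that the initial condition of $\hat R$ is $(\hat R)_0 = R^*_T=\mu_1$, and that the marginal densities of $\hat R$ are $(\rho^*_{T-t})_{t\in[0,T]}$, consistent with the appearance of $\rho^*_{T-t}$ everywhere.

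\medskip
\noindent\textbf{Expected main obstacle.}
The conceptual content of the lemma lies in the existence and consistency of $\hat H$ defined by \eqref{Born} — i.e.\ that the reversed drift, which a priori is just some gradient-plus-$D_\mathrm s$ expression, can actually be written in the ``external field'' form $2\sigma(\rho)\nabla\hat H/\rho$ with $\hat H$ given by Born's rule $H^*+\hat H=f'(\rho^*)$. The algebra in Step~2 shows this is forced by the Einstein relationship, so the real work is bookkeeping with the transport coefficients; the only genuine subtlety is justifying the reversal formula of Step~1 in the time-inhomogeneous, possibly-degenerate ($D_\mathrm s$ may vanish) setting, and checking that the solution of the canonical equations \eqref{caneq} provides enough regularity of $\rho^*$ and $H^*$ for these manipulations to make sense weakly — consistently with the paper's stated intention to be ``somewhat cavalier on the technical details,'' I would carry out the computation formally and remark that it holds weakly.
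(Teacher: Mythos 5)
Your overall route is the same as the paper's: start from the forward characterization of $R^*$ provided by Theorem~\ref{t:dspvsmsp}, invoke the Haussmann--Pardoux time-reversal theorem, and then identify the reversed drift with \eqref{tdgpopr} by means of the Einstein relation \eqref{er} and Born's rule \eqref{Born}. However, the reversal formula you state in Step~1 is incorrect for a generator written in divergence form, and the error is exactly of the size that the remaining algebra is supposed to control. The Haussmann--Pardoux correction $\frac{2}{\alpha_{T-t}}\nabla\cdot(a_{T-t}\alpha_{T-t})$ applies to the drift $b$ of the generator written as $\tr\big(a\,\partial^2 f\big)+b\cdot\nabla f$, i.e.\ to $b=c+\nabla\cdot a$, not to the divergence-form drift $c$ in $\nabla\cdot a\nabla+c\cdot\nabla$. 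Converting back to divergence form, the reversed generator is $\nabla\cdot a_{T-t}\nabla+\hat c_t\cdot\nabla$ with
\begin{equation*}
  \hat c_t=-c_{T-t}-2\,\nabla\cdot a_{T-t}
  +\frac{2}{\alpha_{T-t}}\nabla\cdot\big(a_{T-t}\alpha_{T-t}\big)
  =-c_{T-t}+\frac{2\,a_{T-t}\nabla\alpha_{T-t}}{\alpha_{T-t}},
\end{equation*}
and the extra $-2\,\nabla\cdot a_{T-t}$ is precisely the term $-2\,\nabla\cdot D_\mathrm{s}(\rho^*_{T-t})$ appearing in the paper's intermediate expression for $\hat L_t$. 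With your formula as written, the drift you obtain exceeds the correct one by $2D_\mathrm{s}'(\rho^*_{T-t})\nabla\rho^*_{T-t}$, a term with no counterpart in \eqref{tdgpopr} and cancelled by nothing in $-c_{T-t}$ (which involves only $D_\mathrm{s},D_\mathrm{h},\sigma,\nabla H^*$, never $D_\mathrm{s}'$). So the rearrangement claimed in Step~2 does not actually close: the identification with \eqref{tdgpopr} fails whenever $D_\mathrm{s}$ is non-constant, which is the generic interacting case (the slip is harmless only for, e.g., independent particles or the stirring process, where $D_\mathrm{s}$ is constant).

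The repair is immediate and brings you back to the paper's computation: with the correct reversed divergence-form drift $-c_{T-t}+2D_\mathrm{s}(\rho^*_{T-t})\nabla\rho^*_{T-t}/\rho^*_{T-t}$ the drift becomes $\big[D_\mathrm{s}(\rho^*)+D_\mathrm{h}(\rho^*)\big]\nabla\rho^*/\rho^*-2\sigma(\rho^*)\nabla H^*/\rho^*$ evaluated at time $T-t$, and writing $D_\mathrm{h}(\rho^*)\nabla\rho^*=\sigma(\rho^*)f''(\rho^*)\nabla\rho^*=\sigma(\rho^*)\nabla\big(H^*+\hat H\big)$ via \eqref{er} and \eqref{Born} gives exactly \eqref{tdgpopr}. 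Alternatively, your parenthetical remark is the correct substitute: the reversed law has marginals $\rho^*_{T-t}$, expected current $-j^*_{T-t}$ and diffusion coefficient $D_\mathrm{s}(\rho^*_{T-t})$, and these three data determine the diffusion generator uniquely (as the paper notes right after its proof), which forces \eqref{tdgpopr}; but you do not execute that route, and the route you do sketch contains a genuine algebraic error rather than bookkeeping left to the reader.
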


As shown in \cite{CC}, the pair $(H^*,\hat H)$ solves
\begin{equation}\label{secH}
  \begin{cases}
    \partial_t H^* = - D_\mathrm{h}(\rho^*)\Delta H^* - \sigma'(\rho)
    \big|\nabla H^*\big|^2,\\
    \partial_t \hat H = D_\mathrm{h}(\rho^*)\Delta \hat H +
    \sigma'(\rho)\big|\nabla \hat H\big|^2.
  \end{cases}
\end{equation}
The validity of the above pair of equations can also be deduced
directly by observing that both 
$(\rho^*_t,H^*_t)$
and $(\rho^*_{T-t}, \hat H_{T-t})$
solve the Hamilton equations \eqref{caneq}.
The two equations in \eqref{secH} are indeed two copies of the second
equation in \eqref{caneq} with the same $\rho^*$, one of the two with
inverted sign.

\begin{proof}
  According to the general result in \cite{HP}, the measure
  $\hat R$ is the law of the time-inhomogeneous Markov diffusion
   with time dependent generator
   \begin{equation*}
     \begin{split}
     \hat L_t =  & \nabla\cdot D_\mathrm{s} (\rho^*_{T-t}) \nabla 
    +
    \Big\{
    - \big[ D_\mathrm{s} (\rho^*_{T-t}) -  D_\mathrm{h} (\rho^*_{T-t})\big]
    \frac {\nabla \rho^*_{T-t}}{\rho^*_{T-t}} 
    - 2 \frac {\sigma(\rho^*_{T-t})}{\rho^*_{T-t}}
    \nabla {H^*}_{T-t}
    \\
    & \phantom{\nabla\cdot D_\mathrm{s} (\rho^*_{T-t}) \nabla +}
      - 2 \,\nabla\cdot D_\mathrm{s} (\rho^*_{T-t})
  +\frac 2{\rho^*_{T-t}}
  \nabla\cdot \big(D_\mathrm{s} (\rho^*_{T-t}) \rho^*_{T-t} \big) 
    \Big\}\cdot \nabla.
  \end{split}
\end{equation*}
In view of the Einstein relationship \eqref{er}, straightforward
computations yield the statement.
\end{proof}
Also in this case the generator \eqref{tdgpopr} can be characterized
as the unique diffusion process \eqref{gen-diff} having time marginal
$\rho^*_{T-t}$, averaged current $-j^*_{T-t}$ and time dependent
diffusion coefficient $D(t)=D_\mathrm{s} (\rho^*_{T-t})$.

\smallskip
To further compare the previous statement to the case of independent
particles, we write equation \eqref{Born} in the form
\begin{equation}\label{veraborn}
\rho^*_t=(f')^{-1}\big(H^*_t+\hat H_t\big),
\end{equation}
that is well defined since $f'$ is monotone increasing.
As for independent particles $f'(\rho)=\log\rho$,
\eqref{veraborn} reduces to what mathematicians
call Euclidean Born's formula, see \cite[Thm.~3.3]{Lrev}.

\subsection*{Relationship with completely integrable systems}
 
For a special class of particle systems, that corresponds to the case
in which the hydrodynamic diffusion $D_\mathrm{h}$ is constant and the
mobility $\sigma$ is a second order polynomial, it is possible, in the
one dimensional case, to solve the variational problem (HSP') by a
change of variables that transforms the Hamiltonian equations
\eqref{caneq} into a completely integrable system \cite{MHS,MHSl}.
Examples of models in this class are the exclusion/inclusion process
and the KMP process \cite{BDGJLrev}; the quadratic mobility can be
either concave or convex.
In principle, this transformation can be exploited to obtain analytic
solutions to the the Schr\"odinger problems.

The transformations are a generalization of the canonical Cole-Hopf
transformation \eqref{C-H} used in the case of independent particles.
In the new variables, the equations have still the canonical form, but
the corresponding Hamiltonian is different from the original one.
In the case of a one dimensional domain that is the infinite line, the
transformations are \cite{MHS,MHSl}:
\begin{equation}\label{simply-trans}
\begin{cases}
\xi=\frac{1}{\sigma'(\rho)}\partial_x\left(\sigma(\rho)\exp\left[-\int_{-\infty}^x{\sigma'(\rho)\partial_y 
    H}\,dy\right]\right)\,, \\ 
\eta=\frac{-1}{\sigma'(\rho)}\partial_x\left(\exp\left[\int_{-\infty}^x{\sigma'(\rho)\partial_y H}\,dy\right]\right)\,.
\end{cases}
\end{equation}
After such transformation the canonical equations \eqref{caneq} become
in the new variables the Ablowitz-Kaup-Newell-Segur (AKNS)
equations,
\begin{equation}\label{kirone2}
\begin{cases}
\partial_t\xi= \partial^2_x\xi-2\eta\xi^2\,, \\
\partial_t\eta=-\partial^2_x\eta+2\xi \eta^2\,,
\end{cases}
\end{equation}
which can be studied using the inverse scattering method. The
equations \eqref{kirone2} are Hamiltonian with respect to the
Hamiltonian
$\ms K(\xi,\eta)=\int \left(\nabla \xi\cdot \nabla
  \eta-\xi^2\eta^2\right) dx$ that is different from
$\ms H(\rho(\xi,\eta),H(\xi,\eta))=\int \xi \eta dx$.

In the case of the KMP model, that corresponds to
$\sigma(\rho)=\rho^2$, the Hamilton equations \eqref{caneq} can also
be directly related to the nonlinear Schr\"odinger equation which is a
classic integrable system \cite{BSM}.

\section{Schr\"odinger problem with constraint on the expected current}
\label{s:spcec}

Within this section we consider versions of the Schr\"odinger problem
for interacting particles with constraints on the time averaged
expected current.  According to the physical interpretation of
interacting particle systems as basic models for out of equilibrium
processes, like mass transportation, such constraints appears natural
both from a conceptual point of view and in the description of an
actual experiment. The general picture is similar to the case of
constraints on the initial and final density discussed in
Section~\ref{s:spip}, in particular the optimal measure will be a
Markovian measure. On the other hand, the external field needed to
realize a minimizer will no longer be a gradient vector field. Again
we are considering here constraints only in the hydrodynamic
observables so that the minimal values of the variational problems can
be computed considering just the hydrodynamic part of the rate
functional.

While it is not here discussed, it is also natural to introduce
versions of the Schr\"odinger problem with constraints on the law of
the stochastic empirical current, as introduced in \eqref{def-tagged},
rather than on its expectation. In this case it is not possible to
reduce to the hydrodynamical rate function and, in general, the
optimal measure will not be Markovian.

Recall that $I$ and $\mf F$ are the hydrodynamical functional and the
QRV functional as respectively introduced \eqref{rfh} in and
\eqref{rfqrv}.  We here denote $\Omega_0$ the space of smooth
functions $f \colon \bb T^d\to \bb R$ and by $\Omega_1$ the space of
smooth functions $\omega\colon \bb T^d\to \bb R^d$. Note that, in
contrast to Section~\ref{s:rf}, $\omega\in\Omega_1$ does not depend on
time.

\subsection*{Constraint on the time averaged expected current}

The Schr\"odinger problem with constraints on the time averaged
expected current is formulated as follows in which we emphasize that
we consider the cost per unit of time.

\begin{definition}
  \label{t:spmc}
  Fix $T>0$ and a divergence free vector field $\bar J \in L^2(\bb
  T^d;\bb R^d)$
  \begin{itemize}
  \item[(HSPC)]The \emph{hydrodynamic Schr\"odinger problem with
      current constraint} is the optimization problem 
    \begin{equation*}
      \phi_T(\bar J) :=
      \frac 1T
      \inf \Big\{ I(\pi,\ms J),\; \frac 1T \ms J(\omega)
      = \int\!dx\, \bar J\cdot \omega,\; \omega \in\Omega_1\Big\}.
    \end{equation*}
   \item[(MSPC)]The \emph{measure Schr\"odinger problem with
      current constraint} is the optimization problem 
    \begin{equation*}
      \Phi_T(\bar J) :=
      \frac 1T
      \inf \Big\{ \mf F(R),\;
      \frac 1T\int\!R(dX) \,  \int_0^T\!\omega(X_t)\circ dX_t
      = \int\!dx\, \bar J\cdot \omega,\; \omega \in\Omega_1\Big\}.
    \end{equation*}   
  \end{itemize}
\end{definition}

As discussed in Section~\ref{s:rf}, if $I(\pi,\ms J)<+\infty$, then
$\ms J$ admits a $L^2$ representative $j=j_t(x)$. In terms of this
representative, the constraint in (HSPC) can be rewritten as
\begin{equation}
  \label{conrc}
   \frac 1T \int_0^T\!dt \, j_t = \bar J.
\end{equation}
Likewise, if $\mf F(R)<+\infty$ then $\int\!R(dX)\, \mc J(X)$ admits a
$L^2$ representative $j=j^R$ and the constraint in (MSPC) can be
rewritten as above.

In the context of the one-dimensional exclusion process the problem
(HSPC) has been originally introduced in \cite{BD1} in the case
$T=\infty$. The present variational formulation is due to 
\cite{BDGJLldcur} where it is proven the existence of
$\lim_{T\to\infty} \phi_T(\bar J)$.
We refer to \cite{BGL} for a variational formulation of the
corresponding limiting problem.

Since $\bar J$ is divergence free, there is a simple strategy to
achieve the constraint in (HSPC): consider the constant path
$\rho_t=\bar\rho$, $j_t =\bar J$, $t\in[0,T]$, and optimize on the
density $\bar\rho$.  The corresponding dynamical cost $\mc U (\bar J)$
is given by
\begin{equation*}
  \mc U(\bar J) =\inf_{\bar\rho}
  \frac 14 \int\!dx\,
  \frac { | \bar J+ D_\mathrm{h}(\bar\rho) \nabla\bar\rho |^2}
  {\sigma(\bar\rho)}.
\end{equation*}
Then
\begin{equation}
  \label{f<u}
  \phi(\bar J) := \lim_{T\to\infty} \phi_T(\bar J) \le \mc U(\bar J).
\end{equation}
As shown in \cite{BDGJLldcur} this strategy is optimal for the zero
range case, in particular for independent particles.  On the other
hand, in the one-dimensional case, as discussed in \cite{BD2} for the
weakly asymmetric simple exclusion and in \cite{BDGJLldcur} for the
KMP model, the inequality \eqref{f<u} is strict for $\bar J$ large
enough.  In this case the minimizer for the problem (HSPC), in the
limit $T\to\infty$, exhibits a non trivial time dependence.

As in the case of  the Schr\"odinger problem with constraint on the
initial and final distribution of the particles, see
Theorem~\ref{t:hspcmspc} below, the optimal measure for (HSPC) can be
characterized in terms of the optimal path for (MSPC).

\subsection*{Constraints on both the density and the expected current} 

We next introduce a version of the Schr\"odinger problem with
constraints both on the initial and final density and on the expected
current. Again we consider the cost per unit of time.

\begin{definition}
  \label{t:spmdc}
  Fix $T>0$ and $\mu_0,\mu_1\in \ms M_+(\bb T^d)$ which we assume to
  be absolutely continuous with respect to $dx$ and with the same
  mass. Fix also  $\bar J \in L^2(\bb T^d;\bb R^d)$ satisfying the
  compatibility condition 
  \begin{equation}
    \label{cc}
    \mu_1(f)-\mu_0(f) = T\int\!dx \, \bar J \cdot \nabla f,
    \qquad f\in\Omega_0.  
  \end{equation}
  \begin{itemize}
  \item[(HSPDC)]The \emph{hydrodynamic Schr\"odinger problem with
      density and current constrain\-ts} is the optimization problem 
     \begin{equation*}
       \psi_T(\mu_0,\mu_1; \bar J) :=
       \frac 1T \inf \Big\{ I(\pi,\ms J),\;\pi_0=\mu_0,\:\pi_T=\mu_1,\:
       \ms J(\omega)
       = \frac 1T\int\!dx\, \bar J\cdot \omega,\; \omega \in\Omega_1\Big\}.
     \end{equation*}
   \item[(MSPDC)]The \emph{measure Schr\"odinger problem with
      density and current constraint} is the optimization problem 
    \begin{equation*}
      \begin{split}
      \Psi_T(\mu_0,\mu_1; \bar J) :=&
      \frac 1T \inf \Big\{ \mf F(R),\;R_0=\mu_0,\:R_T=\mu_1,\:
      \\
      &\quad
      \frac 1T \int\!R(dX) \int_0^T\!\omega(X_t)\circ dX_t
      = \int\!dx\, \bar J\cdot \omega,\; \omega \in\Omega_1\Big\}.
    \end{split}
  \end{equation*}   
  \end{itemize}
\end{definition}

If we denote by $j=j_t(x)$ both the $L^2$ representative of $\ms J$
and of the expected current under $R$, the current constraint in
(HSPDC) and (MSPDC) can be rewritten as in \eqref{conrc}
\begin{equation*}
  \frac 1T \int_0^T\!dt \, j_t = \bar J
\end{equation*}
where we emphasize that we have now assumed
the compatibility condition \eqref{cc} instead of
$\nabla\cdot \bar J=0$.

As in the case of (HSP'), the optimization (HSPDC) can be formulated
in terms of an action functional. For this purpose, we recall the
extended Hamiltonian structure introduced in \cite{BDGJLrev}.  In the
present case in which the space domain is the torus, we need to
slightly modify the construction.
Fix a path $(\pi,\ms J)$ with finite $I$ cost and denote by
$(\rho_t,j_t)$, $t\in[0,T]$, the corresponding density and
representative of the current.  Let $A_0\colon \bb T^d\to \bb R^d$ be
such that
\begin{equation}\label{defA0}
\nabla\cdot A_0=\rho_0-m\,, \qquad m=\int \rho_0(x)\,dx\,.
\end{equation}
We introduce $A\colon [0,T]\times \bb T^d\to \bb R^d$ defined  by
\begin{equation}\label{defA}
  A_t = A_0 - \int_0^t\!ds\, j_s\,,
\end{equation}
that satisfies the relations 
\begin{equation}\label{Arj}
\left\{
\begin{array}{l}
\nabla\cdot A_t=\rho_t-m\,, \\
\partial_t A_t+ j_t =0\,.
\end{array}
\right.
\end{equation}
One of the advantages of the introduction of the vector variables $A$
is that by \eqref{Arj}, the continuity equation is automatically
satisfied.  Then the dynamical contribution \eqref{rfhdyn} to the
hydrodynamical functional can be rewritten using only the vector $A$
as
\begin{equation}\label{actioncurr}
  \mc I_{\textrm{dyn}}
  (A) = \frac 14 \int_0^T\!dt\int\!dx\,
  \frac{\big| \partial_t A - \tilde D_\mathrm{h}(\nabla\cdot A) \nabla
    \left(\nabla\cdot A\right)\big|^2}{\tilde\sigma(\nabla\cdot A)},
\end{equation}
where $\tilde D_\mathrm{h}(\rho):=D_\mathrm{h}(\rho+m)$ and
$\tilde\sigma(\rho):=\sigma(\rho+m)$.
Formula \eqref{actioncurr} can interpreted as an action functional
obtained as the integration of a Lagrangian, i.e.\ we have $\mc
I_{\textrm{dyn}}(A)=\int_0^Tdt\int dx \,\mathcal L(\partial_t A, A)$
for the Lagrangian function $\mathcal L$ that depends just on
$\partial_t A$ and $A$ that is the integrated term in
\eqref{actioncurr}.
The critical points of the functional $ \mc I_{\textrm{dyn}}$ can
therefore be identified by the machinery of the
Lagrangian and Hamiltonian formalism.
This is the advantage of the introduction of the vector $A$ in
\eqref{defA}.  Note that $\mc I_{\textrm{dyn}}$ is invariant under the
gauge transformation $A\mapsto A+A'$ when $A'$ is time independent and
$\nabla \cdot A'=0$.

\smallskip
The variational problem (HSPDC) can be faced using the variables $A$ as follows. Introduce the function
\begin{equation}\label{HA}
\hat\psi_T(\mathcal A_0,\mathcal A_1):=\frac 1T \inf \Big\{ \mc I_{\textrm{dyn}} (A),\; A_0=\mathcal A_0,\:A_T=\mathcal A_1\Big\}
\end{equation}
By the gauge invariance we have that 
\begin{equation}
\hat\psi_T(\mathcal A_0,\mathcal A_1)=\hat\psi_T(\mathcal A_0+A',\mathcal A_1+A')\,, \qquad \forall A'\ \  \textrm{such that}\ \ \nabla \cdot A'=0\,.
\end{equation}
 Moreover if $\rho_0, \rho_1$ are the densities of $\mu_0, \mu_1$, we have 
\begin{equation}
\psi_T(\mu_0,\mu_1; \bar J)=\hat\psi_T(\mathcal A_0,\mathcal A_1)\,, \qquad \textrm{if}\ \  \nabla\cdot \mathcal A_0=\rho_0-m\,,\  \mathcal A_0-\mathcal A_1=T\bar J\,.
\end{equation}

The variational problem \eqref{HA} is the classic minimization problem
for the action $\mc I_{\textrm{dyn}}$ under a pinning condition at the
extrema. The solution is obtained either by Euler Lagrange equations
or Hamilton equations.

The Hamiltonian associated to the action in \eqref{actioncurr} reads
\cite{BDGJLrev}
\begin{equation}\label{HamA}
  \mc H (A,B) = \int\!dx\,\big[
  \tilde\sigma(\nabla\cdot A) |B|^2 - \nabla\cdot A \, \nabla\cdot
  \big( \tilde D_\mathrm{h}(\nabla\cdot A) B \big)\big],
\end{equation}
where $B\colon \bb T^d\to \bb R^d$ is the momentum conjugated to $A$. We point out that this Hamiltonian on vector variables, extends the Hamiltonian on scalar variables \eqref{ham}. In particular if
$\nabla\cdot A=\rho-m$ and $B=-\nabla H$, we have $\mc H (A,B)=\ms H(\rho, H)$. 
The Hamiltonian equations associated to \eqref{HamA} are
\begin{equation}
  \label{excaneq}
  \begin{cases}
    \partial_t A = \tilde D_\mathrm{h}(\nabla\cdot A)\nabla \nabla\cdot A
    + 2 \tilde\sigma(\nabla\cdot A) B,\\
    \partial_t B = - \nabla\Big[
    \left(\tilde D_\mathrm{h}(\nabla\cdot A) \nabla\right)\cdot B
    - 2 \tilde \sigma'(\nabla\cdot A) |B|^2 \Big]\,.
  \end{cases}
\end{equation}
We can fix the gauge freedom determined by $A'$ for example in such a way that $A_0$ is of gradient type and
satisfy the initial condition $\nabla\cdot A_0=\rho_0-m$. This can be done defining $A_0:=\nabla a_0$ where $a_0$ is a solution of $\Delta a_0=\rho_0-m$.

Consider the pair $(\pi^*,\ms J^*)$ with density and current representation $(\rho^*,j^*)$ that is a minimizer for 
(HSPDC). In particular we therefore have that $\rho^*_0,\rho^*_T$ are the density of respectively $\mu_0,\mu_1$ and $\int_0^Tj^*_s\!ds=T\bar J$.  We define then 
\begin{equation}
\begin{cases}
A^*_t = A^*_0 - \int_0^t\!ds\, j^*_s\,,\\
B^*_t=\frac{\partial \mathcal L(\partial_tA^*_t,A^*_t)}{\partial (\partial_t A_t)}=-\frac{j^*_t+D_\mathrm{h}(\rho_t^*)\nabla\rho^*_t}{2\sigma(\rho_t^*)}\,,
\end{cases}
\end{equation}
and observe that by the constraints of (HSPDC) we have $A^*_T-A^*_0=T\bar J$ and $\nabla\cdot A^*_T=\rho^*_T-m$. We have that $(A^*,B^*)$ satisfy
the Hamilton equations \eqref{excaneq}; this follows by the general theory, since the critical points of the problem \eqref{HA} are solutions of the Hamiltonian equations \eqref{excaneq} (see \cite{BDGJLrev} for a brief discussion).
\smallskip

We point out that the vectorial Hamiltonian structure given by \eqref{HamA} and \eqref{excaneq} extends and includes the scalar Hamiltonian structure given by \eqref{ham}, \eqref{caneq}.
As can be verified by a direct computation , given $(\rho,H)$ a solution of \eqref{caneq} then if we compute
as before $A_0=\nabla a_0$ with $\Delta a_0=\rho_0-m$ and then define
\begin{equation}\label{ABrg}
\begin{cases}
A_t = A_0 - \int_0^t\!ds\,\big(-D_\mathrm{h}(\rho_s)\nabla\rho_s+2\sigma(\rho_s)\nabla H_s\big)\,,\\
B_t=-\nabla H_t\,,
\end{cases}
\end{equation}
we obtain a solution of \eqref{excaneq}. In \eqref{excaneq} there are however solutions
for which $B$ is not gradient and that cannot be written starting from solutions to \eqref{caneq} as in \eqref{ABrg}.

The proof of following statement, that characterizes the minimizers to
(MSPC) in terms to the minimizers to (HSPC), and minimizers of (MSPDC)
in terms of minimizers of (HSPDC) is achieved by the same arguments in
Theorem~\ref{t:dspvsmsp} and it is therefore omitted.

\begin{theorem}
	\label{t:hspcmspc}
	For each $T>0$ and $\bar J, \mu_0, \mu_1$ we have
\begin{equation*}
	\phi_T(\bar J)=\Phi_T(\bar J), \qquad 
	\psi_T(\mu_0,\mu_1,\bar J)=\Psi_T(\mu_0,\mu_1,\bar J).
\end{equation*}        
	Let $(\pi^*,\ms J^*)$ be a minimizer for (HSPC), respectively (HSPCD) and denote by
	$(\rho^*,j^*)= (\rho^*_t,j^*_t)_{t\in[0,T]}$ the corresponding
	density and current representative. 
	Then a minimizer for (MSPC) respectively (MSPDC) is provided by $R^*$, the law of the
	time-inhomogeneous Markov diffusion with initial condition
	$R^*_0 =\rho^*_0\,dx$ and time dependent generator
	\begin{equation*}
	L^*_t = \nabla\cdot D_\mathrm{s} (\rho^*_t) \nabla 
	+
	\Big\{
	\big[ D_\mathrm{s} (\rho^*_t) -  D_\mathrm{h} (\rho^*_t)\big]
	\frac {\nabla \rho^*_t}{\rho^*_t} 
	+ 2\frac {\sigma(\rho^*_t)}{\rho^*_t} E^*_t \Big\}\cdot \nabla
	\end{equation*}
	where $j^*= - D_\mathrm{h} (\rho^*)\nabla\rho^*
	+2\sigma(\rho^*)E^*$.\hfill\break 
	Conversely, if $R^*$ is a minimizer for (MSPC) respectively (MSPDC ) then $R^*$ is Markovian $R^*=P(R^*)$ and
	$\big( (R^*_t)_{t\in[0,T]}, \int\!R^*(dX)\, \mc J(X)\big)$ is a
	minimizer for (HSPC) respectively (HSPDC). \hfill\break
	The pair $(A^*,E^*)$ defined as in \eqref{ABrg} solves the Hamilton equations \eqref{excaneq}.
\end{theorem}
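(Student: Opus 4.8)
The plan is to follow the proof of Theorem~\ref{t:dspvsmsp} essentially verbatim, the one genuinely new ingredient being that the minimizing hydrodynamic path is now identified with a solution of the \emph{extended} Hamilton equations \eqref{excaneq} in the gauge variables $A$, which take over the role played by the scalar canonical equations \eqref{caneq} in Theorem~\ref{t:dspvsmsp}. The point, exactly as there, is that the constraints defining (MSPC) and (MSPDC) involve a path measure $R$ only through its single-time marginals $(R_t)_{t\in[0,T]}$ and through its expected stochastic current $\ms J^R=\int\!R(dX)\,\mc J(X)$, and that both of these are left unchanged when $R$ is replaced by the Markovian measure $P(R)$ of \eqref{tdgp}.

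First I would prove $\Phi_T(\bar J)\ge\phi_T(\bar J)$ and $\Psi_T(\mu_0,\mu_1,\bar J)\ge\psi_T(\mu_0,\mu_1,\bar J)$: for $R$ admissible in (MSPC), respectively (MSPDC), the decomposition \eqref{rfqrv} gives
\[
\mf F(R)=I\big((R_t)_{t\in[0,T]},\ms J^R\big)+\Ent\big(R\,\big|\,P(R)\big)\ \ge\ I\big((R_t)_{t\in[0,T]},\ms J^R\big),
\]
and $\big((R_t)_{t\in[0,T]},\ms J^R\big)$ is admissible in (HSPC), respectively (HSPDC). For the reverse inequality I would start from a minimizer $(\pi^*,\ms J^*)$ of (HSPC), respectively (HSPDC), with representatives $(\rho^*,j^*)$, set $E^*:=(j^*+D_\mathrm{h}(\rho^*)\nabla\rho^*)/(2\sigma(\rho^*))$ so that $j^*=-D_\mathrm{h}(\rho^*)\nabla\rho^*+2\sigma(\rho^*)E^*$, and let $R^*$ be the time-inhomogeneous diffusion in the statement. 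Since $I(\pi^*,\ms J^*)<+\infty$, the field $E^*$ is the external field $E^{R}$ of \eqref{phe} for any $R$ with marginals $\rho^*_t\,dx$ and expected current $j^*$, so $R^*$ coincides with the corresponding $P(R)$; in particular $R^*=P(R^*)$ is well defined and Markovian, hence $\Ent(R^*|P(R^*))=0$, and \eqref{rfqrv} together with the fact that $R^*$ meets the constraints yields $\mf F(R^*)=I(\pi^*,\ms J^*)$. This proves the equalities of the value functions and that $R^*$ is a minimizer. For the converse, if $R^*$ minimizes (MSPC), respectively (MSPDC), then $\mf F(R^*)$ equals the common minimal value, while $\mf F(R^*)=I\big((R^*_t)_{t\in[0,T]},\ms J^{R^*}\big)+\Ent(R^*|P(R^*))$ with the first term itself bounded below by that same value; hence all inequalities are saturated, which forces $R^*=P(R^*)$ Markovian and makes $\big((R^*_t)_{t\in[0,T]},\ms J^{R^*}\big)$ a minimizer for (HSPC), respectively (HSPDC).

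It then remains to check that $(A^*,E^*)$ built from $(\rho^*,j^*)$ as in \eqref{ABrg} solves \eqref{excaneq}. Here I would appeal to the Lagrangian/Hamiltonian formalism for the action $\mc I_{\textrm{dyn}}$ of \eqref{actioncurr}, recalled just before the theorem: setting $A^*_t=A^*_0-\int_0^t\!j^*_s\,ds$ with $A^*_0=\nabla a_0$, $\Delta a_0=\rho^*_0-m$, the averaged-current constraint $\frac1T\int_0^T\!j^*_s\,ds=\bar J$ and, in the (HSPDC) case, the endpoint density constraints fix $A^*_0$ and $A^*_T$ up to the time-independent divergence-free gauge under which $\mc I_{\textrm{dyn}}$ is invariant. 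Thus $A^*$ is a critical point of the pinned problem \eqref{HA}, and by the general theory for the Hamiltonian \eqref{HamA} its conjugate momentum satisfies \eqref{excaneq} (see \cite{BDGJLrev}); in the (HSPC) case $\rho^*_0$ is additionally free, which produces the corresponding natural boundary condition, the interior equations being again \eqref{excaneq}.

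The step I expect to be the main obstacle is precisely this last one: rigorously turning ``a minimizer of the constrained problem'' into ``a bona fide critical point of $\mc I_{\textrm{dyn}}$'', i.e.\ justifying the passage to gauge variables, verifying that the averaged-current constraint is equivalent to the pinning $A^*_T-A^*_0=-T\bar J$ modulo the divergence-free gauge, and securing enough regularity of the minimizer to run the Euler--Lagrange / Hamilton computation. The entropy half of the argument, by contrast, is completely routine, exactly as in Theorem~\ref{t:dspvsmsp}.
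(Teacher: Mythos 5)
Your proposal is correct and follows essentially the same route as the paper, which in fact omits the proof, stating it is obtained by the same arguments as Theorem~\ref{t:dspvsmsp} together with the discussion of the gauge variables $A$ and the pinned action problem \eqref{HA} given just before the statement. Your decomposition $\mf F(R)=I\big((R_t),\ms J^R\big)+\Ent(R|P(R))$ with saturation iff $R=P(R)$, the observation that the constraints depend on $R$ only through marginals and expected current, and the appeal to the Lagrangian/Hamiltonian formalism for $\mc I_{\mathrm{dyn}}$ to get \eqref{excaneq} (with your sign $A^*_T-A^*_0=-T\bar J$ being the one consistent with \eqref{defA}) match the intended argument.
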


In Theorem~\ref{t:hspcmspc} we have characterized the minimizers to
(MSPC) and (MSPDC) as forward Markov measures on $C([0,T];\bb
R^d)$. We can also characterize them as \emph{backward} Markov
measures. This is the content of the next statement whose proof is
similar that of Lemma~\ref{t:trR*} and it is therefore omitted.

\begin{lemma}
	\label{t:trR*c}
	Let $\theta$ be the involution on $C([0,T];\bb T^d)$ defined by
	$(\theta X)_t=X_{T-t}$.
	Given a minimizer $R^*$ for \emph{(MSPC)} or \emph{(MSPDC)};  set $\hat R:= R^*\circ \theta$ and
	let $\hat E\colon [0,T]\times \bb T^d\to \bb R^d$ be such that
	\begin{equation}\label{bH}
	E_t^*+\hat E_{t} = \nabla f'(\rho^*_t), \qquad t\in[0,T]
	\end{equation}
	where $\rho^*$ is the density of the single time marginals of $R^*$
	while $E^*_t$ is the conjugated momentum.
	Then $\hat R$ is the law of the time-inhomogeneous Markov diffusion
	with time dependent generator
	\begin{equation}
	\label{tdgpopr2}
	\hat L_t = \nabla\cdot D_\mathrm{s} (\rho^*_{T-t}) \nabla 
	+
	\Big\{
	\big[ D_\mathrm{s} (\rho^*_{T-t}) -  D_\mathrm{h} (\rho^*_{T-t})\big]
	\frac {\nabla \rho^*_{T-t}}{\rho^*_{T-t}} 
	+ 2 \frac {\sigma(\rho^*_{T-t})}{\rho^*_{T-t}}
	\hat E_{T-t} \Big\}\cdot \nabla
	\end{equation}
	and initial condition $\mu_1$. 
\end{lemma}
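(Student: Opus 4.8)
The plan is to mirror the proof of Lemma~\ref{t:trR*} verbatim, the only new feature being that the external field $E^*$ attached to a minimizer of \emph{(MSPC)} or \emph{(MSPDC)} need not be a gradient; as will be clear, this plays no role whatsoever in the argument, which only uses the time-reversal formula for diffusions.

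\textbf{Step 1: identify $R^*$ as a forward diffusion.} By Theorem~\ref{t:hspcmspc} a minimizer $R^*$ for \emph{(MSPC)} (resp.\ \emph{(MSPDC)}) is Markovian, $R^*=P(R^*)$, hence it is the law of the time-inhomogeneous diffusion on $\bb T^d$ with initial condition $\rho^*_0\,dx$ and time-dependent generator $L^*_t = \nabla\cdot D_\mathrm{s}(\rho^*_t)\nabla + \big\{[D_\mathrm{s}(\rho^*_t)-D_\mathrm{h}(\rho^*_t)]\nabla\rho^*_t/\rho^*_t + 2\sigma(\rho^*_t)E^*_t/\rho^*_t\big\}\cdot\nabla$, where $\rho^*_t$ is the density of $R^*_t$ and $j^*=-D_\mathrm{h}(\rho^*)\nabla\rho^*+2\sigma(\rho^*)E^*$ represents the expected current. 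Its final marginal $R^*_T$ is the datum that will serve as the initial condition of $\hat R$; we denote it $\mu_1$ (in the \emph{(MSPDC)} case it is the prescribed final density, in the \emph{(MSPC)} case it is the final marginal of the minimizer).

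\textbf{Step 2: time reversal.} Writing $L^*_t$ in the form \eqref{gen-diff} with diffusion coefficient $D_\mathrm{s}(\rho^*_t)$ and reading off the resulting drift, the general time-reversal result of \cite{HP} gives, exactly as in the proof of Lemma~\ref{t:trR*}, that $\hat R = R^*\circ\theta$ is the law of the time-inhomogeneous diffusion on $\bb T^d$ with initial condition $\mu_1$, diffusion coefficient $D_\mathrm{s}(\rho^*_{T-t})$, and generator
\begin{equation*}
  \begin{split}
  \hat L_t ={}& \nabla\cdot D_\mathrm{s}(\rho^*_{T-t})\nabla
  + \Big\{ -\big[ D_\mathrm{s}(\rho^*_{T-t}) - D_\mathrm{h}(\rho^*_{T-t})\big]
  \frac{\nabla\rho^*_{T-t}}{\rho^*_{T-t}}
  - 2\,\frac{\sigma(\rho^*_{T-t})}{\rho^*_{T-t}}\, E^*_{T-t}
  \\
  &\quad{}
  - 2\,\nabla\cdot D_\mathrm{s}(\rho^*_{T-t})
  + \frac{2}{\rho^*_{T-t}}\,\nabla\cdot\big(D_\mathrm{s}(\rho^*_{T-t})\,\rho^*_{T-t}\big)\Big\}\cdot\nabla.
  \end{split}
\end{equation*}
Here one uses that the single-time marginals of $R^*$ have densities $\rho^*_t$, so that the density entering the time-reversal formula is precisely $\rho^*_{T-t}$.

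\textbf{Step 3: simplification.} Finally I would rewrite this drift using \eqref{bH} in the form $\hat E_t=\nabla f'(\rho^*_t)-E^*_t$, the Einstein relationship \eqref{er} (which gives $\sigma(\rho)\nabla f'(\rho)=D_\mathrm{h}(\rho)\nabla\rho$), and the elementary identity $\tfrac{2}{\rho}\nabla\cdot\big(D_\mathrm{s}(\rho)\rho\big)-2\,\nabla\cdot D_\mathrm{s}(\rho)=2D_\mathrm{s}(\rho)\tfrac{\nabla\rho}{\rho}$; collecting the terms and noting that the $E^*_{T-t}$ contribution in $\hat L_t$ above is exactly cancelled against the $E^*$ part of $2\sigma\hat E/\rho$, the generator $\hat L_t$ becomes \eqref{tdgpopr2}. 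These are the same computations already carried out at the end of the proof of Lemma~\ref{t:trR*}, and I would not reproduce them. The only point that genuinely requires care — and which, consistently with the rest of the paper, I would take for granted rather than verify — is that the regularity of $\rho^*$ and the square-integrability of the drift suffice for the hypotheses of the time-reversal theorem of \cite{HP}; the current constraint introduces no extra difficulty here, since that formula is insensitive to whether $E^*$ is of gradient type.
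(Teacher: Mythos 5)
Your proposal is correct and takes essentially the same route as the paper, which omits this proof precisely because it is the argument of Lemma~\ref{t:trR*} repeated: identify $R^*$ as the forward Markov diffusion via Theorem~\ref{t:hspcmspc}, apply the time-reversal result of \cite{HP}, and simplify the reversed drift using \eqref{bH} together with the Einstein relationship \eqref{er}. Your remark that the argument is insensitive to $E^*$ failing to be a gradient is exactly the point that makes the verbatim transfer from Lemma~\ref{t:trR*} legitimate.
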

Notice that formula \eqref{bH} is a generalization of the Born's
formula \eqref{Born}, \eqref{veraborn} and is a special case of the
fluctuation-dissipation relation in  \cite[Eq.~(5.5)]{BDGJLptcur}.

By the same argument used for equations \eqref{secH}, the pair
$(E^*,\hat E)$ solves
\begin{equation*}
\begin{cases}
\partial_t E^* = - \nabla\Big[\left(
D_\mathrm{h}(\rho^*) \nabla\right)\cdot E^*
+2 \sigma'(\rho^*) |E^*|^2 \Big]\,,\\
\partial_t \hat E = \nabla\Big[\left(
D_\mathrm{h}(\rho^*) \nabla\right)\cdot \hat E
+ 2 \sigma'(\rho^*) |\hat E|^2 \Big]
\end{cases}
\end{equation*}
An interesting issue is whether there is also for the Hamilton
equations \eqref{excaneq} a relation with integrable systems as
happens for the projected Hamiltonian equations \eqref{caneq}.

\section*{Acknowledgments}
We thank C. Leonard for useful discussions.
L. B. has been co-funded by the European Union (ERC CoG KiLiM,
project number 101125162).
D.G. acknowledges financial support from the Italian Research Funding
Agency (MIUR) through PRIN project “Emergence of condensation-like
phenomena in interacting particle systems: kinetic and lattice
models”, grant no. 202277WX43.


\begin{thebibliography}{99}
\small{
\bibitem{Ar}
  R. Arratia;
  \emph{The motion of a tagged particle in the simple symmetric
    exclusion system on $\bb{Z}$.}
  Ann. Probab. \textbf{11}, 362--373 (1983).
  
\bibitem{CL} 
  J. Backhoff, G. Conforti, I. Gentil, C. L\'eonard;
  \emph{The mean field Schr\"odinger problem: ergodic behavior,
    entropy estimates and functional inequalities.} 
  Probab. Theory Related Fields \textbf{178}, 475--530 (2020).

\bibitem{BD1}
  T. Bodineau, B. Derrida; \emph{Current fluctuations in
    nonequilibrium diffusive systems: an additivity principle}.
  Phys. Rev. Lett. \textbf{92}, 180601 (2004).
		
\bibitem{BD2} T. Bodineau, B. Derrida; \emph{Distribution of current
    in nonequilibrium diffusive systems and phase transitions.}
  Phys. Rev. E \textbf{72}, 066110 (2005).
		

\bibitem{BDGJLptcur} L. Bertini, A. De Sole, D. Gabrielli, G. Jona
  Lasinio, C. Landim; \emph{Non equilibrium current fluctuations in stochastic lattice gases} J. Stat. Phys. {\bf 123}, 237–276 (2006).
  
  
\bibitem{BDGJLldcur} L. Bertini, A. De Sole, D. Gabrielli, G. Jona
  Lasinio, C. Landim; \emph{Large deviations of the empirical current
    in interacting particle systems.}  Theory Probab. Appl. {\bf 51},
  2--27 (2007).
		
\bibitem{BDGJLlpt} L. Bertini, A. De Sole, D. Gabrielli, G. Jona
  Lasinio, C. Landim; \emph{Action functional and quasi-potential for
    the Burgers equation in a bounded interval.}  Comm. Pure
  Appl. Math. \textbf{64}, 649--696 (2011).

\bibitem{BDGJLrev} L. Bertini, A. De Sole, D. Gabrielli, G. Jona
  Lasinio, C. Landim; \emph{Macroscopic fluctuation theory.}
  Rev. Mod. Phys. \textbf{87}, 593--636 (2015).
  
		
\bibitem{BGL} L. Bertini, D. Gabrielli, C. Landim; \emph{Concurrent
    Donsker-Varadhan and hydrodynamical large deviations.}
  Ann. Probab. \textbf{51}, 1289--1341 (2023).
		
\bibitem{BSM} E. Bettelheim, N.R. Smith, B. Meerson; \emph{Inverse
    scattering method solves the problem of full statistics of
    nonstationary heat transfer in the Kipnis-Marchioro-Presutti
    model.}  Phys. Rev. Lett. \textbf{128}, Paper No. 130602 (2022).
		
\bibitem{BC} A. Budhiraja, M. Conroy;
  \emph{Asymptotic behavior of stochastic currents under large
    deviation scaling with mean field interaction and vanishing
    noise.}
  Ann. Sc. Norm. Super. Pisa Cl. Sci. (5) \textbf{23}, 1749--1805 (2022).
		
		
\bibitem{CC} A. Chiarini, G. Conforti, L. Tamanini;
  \emph{Schr\"odinger problem for lattice gases: a heuristic point of
    view.} In `Geometric science of information', 891–899.  Lecture
  Notes in Comput. Sci. 12829.  Springer 2021.
		
\bibitem{DG} D.A. Dawson, J.  G\"artner; \emph{Large deviations from
    the McKean-Vlasov limit for weakly interacting diffusions.}
  Stochastics \textbf{20}, 247--308 (1987).
		
		
\bibitem{FGGT} F. Flandoli, M. Gubinelli, M. Giaquinta,
  V.M. Tortorelli; \emph{Stochastic currents.}  Stochastic
  Process. Appl. \textbf{115}, 1583--1601 (2005).
		
\bibitem{FW} M.I. Freidlin, A. D. Wentzell; Random perturbation of
  dynamical systems. 2nd edn.  Springer, New York, 1998
		
\bibitem{GJL} D. Gabrielli, G. Jona-Lasinio, C. Landim; \emph{Onsager
    reciprocity relations without microscopic reversibility.}
  Phys. Rev. Lett. \textbf{77}, 1202--1205 (1996).
		
		
\bibitem{Gri} D. Griffeath; \emph{Additive and cancellative
    interacting particle systems.}  Lecture Notes in Mathematics
  724. Springer, Berlin 1979.
		
\bibitem{HP} U.G. Haussmann, \'E. Pardoux; \emph{Time reversal of
    diffusions.}  Ann. Probab. \textbf{14}, 1188--1205 (1986).
		
\bibitem{JLS} M.D. Jara, C. Landim, S. Sethuraman;
  \emph{Nonequilibrium fluctuations for a tagged particle in mean-zero
    one-dimensional zero-range processes.}  Probab. Theory Related
  Fields \textbf{145}, 565--590 (2009).
		
\bibitem{KL} C. Kipnis, C. Landim; Scaling limits of interacting
  particle systems.  Springer-Verlag, Berlin, 1999.
		
\bibitem{KOV} C. Kipnis, S. Olla, S.R.S. Varadhan; \emph{Hydrodynamics
    and large deviation for simple exclusion processes.}  Comm. Pure
  Appl. Math. \textbf{42}, 115--137 (1989).
		
\bibitem{KV} C. Kipnis, S.R.S. Varadhan; \emph{Central limit theorem
    for additive functionals of reversible Markov processes and
    applications to simple exclusions.}
  Comm. Math. Phys. \textbf{104}, 1--19 (1986).
		
\bibitem{KLO} T. Komorowski, C. Landim, S. Olla; \emph{Fluctuations in
    Markov processes. Time symmetry and martingale approximation.}
  Springer, Heidelberg, 2012.
		
\bibitem{KMS} P.L. Krapivsky, K. Mallick, T. Sadhu; \emph{Tagged
    particle in single-file diffusion.}  J. Stat. Phys. \textbf{160},
  885--925 (2015).
		
		
\bibitem{Lrev} C. L\'eonard; \emph{A survey of the Schr\"odinger
    problem and some of its connections with optimal transport.}
  Discrete Contin. Dyn. Syst. \textbf{34}, 1533--1574 (2014).
		
\bibitem{MHS} K. Mallick, H. Moriya, T. Sasamoto; \emph{Exact solution
    of the macroscopic fluctuation theory for the symmetric exclusion
    process.}  Phys. Rev. Lett. \textbf{129}, no. 4, Paper No. 040601,
  7 pp (2022).
		
\bibitem{MHSl} K. Mallick, H. Moriya, T. Sasamoto; \emph{Exact
    solutions to macroscopic fluctuation theory through classical
    integrable systems.}  J. Stat. Mech. Theory Exp. 2024, no. 7,
  Paper No. 074001, 18 pp.
		
		
\bibitem{O} C. Orrieri; \emph{Large deviations for interacting
    particle systems: joint mean-field and small-noise limit.}
  Electron. J. Probab. \textbf{25}, Paper No. 111 (2020).
		
\bibitem{osada1} H. Osada; \emph{An invariance principle for Markov
    processes and Brownian particles with singular interaction.}
  Ann. Inst. H. Poincar\'e Probab. Statist. \textbf{34}, 217--248
  (1998).
		
\bibitem{osada2} H. Osada; \emph{Positivity of the self-diffusion
    matrix of interacting Brownian particles with hard core.}
  Probab. Theory Related Fields \textbf{112}, 53–90 (1998).
		
\bibitem{Q1} J. Quastel; \emph{Diffusion of color in the simple
    exclusion process.}  Comm. Pure Appl. Math. \textbf{45}, 623--679
  (1992).
		
\bibitem{QRV} J. Quastel, F.  Rezakhanlou, S.R.S.Varadhan; \emph{Large
    deviations for the symmetric simple exclusion process in
    dimensions $d\ge 3$.}  Probab. Theory Related Fields \textbf{113},
  1--84 (1999).
		
\bibitem{QV} J. Quastel, S.R.S.Varadhan; \emph{Diffusion semigroups
    and diffusion processes corresponding to degenerate divergence
    form operators.}  Comm. Pure Appl. Math. \textbf{50}, 667--706
  (1997).
		
\bibitem{R} F. Rezakhanlou; \emph{Propagation of chaos for symmetric
    simple exclusions.}  Comm. Pure Appl. Math. \textbf{47}, 943--957
  (1994).
		
\bibitem{Sc} E. Schr\"odinger; Sur la th\'eorie relativiste de
  l’\'electron et l’interpr\'etation de la m\'ecanique
  quantique. Ann. Inst. H. Poincar\'e \textbf{2}, 269--310 (1932).
		
		
		
\bibitem{Seo} I. Seo; \emph{Large-deviation principle for interacting
    Brownian motions.}  Commun. Pure Appl. Math. \textbf{70}, 203--288
  (2017).
		
\bibitem{Sp} H. Spohn; Large Scale Dynamics of Interacting Particles.
  Springer-Verlag, Heidelberg 1991.
		
		
		
		
\bibitem{V} S.R.S. Varadhan; \emph{Scaling limits for interacting
    diffusions.}  Comm. Math. Phys. \textbf{135}, 313--353 (1991).
		
\bibitem{Vrev} S.R.S. Varadhan; \emph{Entropy, large deviations, and
    scaling limits.}  Commun. Pure Appl. Math. \textbf{64}, 1914--1932
  (2013).
		
}
\end{thebibliography}
\end{document}